\newtheorem{theorem}{Theorem}[section]
\theoremstyle{definition}
\newtheorem{definition}[theorem]{Definition}
\newtheorem{example}[theorem]{Example}
\newtheorem{prop}[theorem]{Proposition}
\newtheorem{corr}[theorem]{Corollary}
\theoremstyle{remark}
\newcommand{\Z}{\mathbb{Z}}
\newcommand{\gameset}[1]{\mathcal{#1}}
\newcommand{\GL}{\gameset{L}}
\newcommand{\GR}{\gameset{R}}
\newcommand{\graph}[1]{\mathscr{#1}}
\newcommand{\complete}[1]{\graph{K}_{#1}}
\newcommand{\leftmove}[2]{{#1}~\overrightarrow{_{L}}~{#2}}
\newcommand{\rightmove}[2]{{#1}~\overrightarrow{_{R}}~{#2}}
\newcommand{\game}[2]{\left\{{#1}~|~{#2}\right\}}
\newcommand{\zerogame}{\game{}{}}
\newcommand{\farstareq}[2]{{#1}\sim_{\bigstar}{#2}}
\newcommand{\hidethis}[1]{}
\def\Comment#1{\marginpar{$\bullet$\quad{\tiny #1}}}
\def\qed{\hfill\hbox{${\vcenter{\vbox{
    \hrule height 0.4pt\hbox{\vrule width 0.4pt height 6pt
    \kern5pt\vrule width 0.4pt}\hrule height 0.4pt}}}$}}
\numberwithin{equation}{section}
\begin{document}
\thispagestyle{empty}


\title{Partisan Combinatorial Game of Edge and Vertex Removal on Graphs}

\author{Nathan Shank}
\address{Moravian College}
\curraddr{1200 Main Street Bethlehem, PA 18018}
\email{shank@math.moravian.edu, dsvukovich13@gmail.com}

\author{Devon Vukovich}

\subjclass[2010]{Primary. 94C15}

\keywords{Combinatorial Games, Graph Theory, Atomic Weight, All-small}

\date{August 2020}


\begin{abstract}
We consider three variants of a partisan combinatorial game between two players, Left and Right, played on an undirected simple graph. Left is able to delete vertices (and incident edges) while Right is able to delete edges.  
This natural extension of a similar impartial game gives a clear advantage to one player by allowing them the ability to play on a small subgraph which the other player can not.  Our last variant removes this advantage by assuming a move is valid for one player if and only if the other player has a valid move on the same graph.  In this case, we show that the ability to remove a vertex is more advantageous compared to removing edges.  

\end{abstract}

\maketitle


\section{Introduction}

Many games, both partial and impartial, can be played on graphs.  Some games generate graphs of larger size and/or order, for example \textit{Sprouts} \cite{Sprouts}.  While others remove edges and/or vertices to reduce the graph, for example the impartial version of \textit{Hackenbush} \cite{LessonsInPlay}.  In either case, the game ends when a particular end state of the graph is reached.  For edge deletion games a \textit{turn} consists of removing an edge from the graph and possibly additional edges or vertices depending on the structure of the game.  For example, Gallant et. al. \cite{GameEdgeRemoval} considered deleting edges from a graph so long as the graph does not contain an isolated vertex. Similarly in a vertex deletion game, a \textit{turn} consists of removing a vertex which may also result in other vertices or edges being removed.  For example Adams et. al. \cite{SubtractionGame} considered a game where a turn consists of removing a vertex, all incident edges, and all vertices which become isolated. 

Often, the ability for a vertex or edge to be removed is dependent on some properties of the object.  Nowakowski and Ottaway \cite{VertexParity} considered a vertex removal game based on the parity of the vertex while Harding and Ottaway \cite{EdgeParity} considered variants of an edge removal game based on the parity of the incident vertices.  In either case, the underlying principle of the game clearly define what objects may or may not be removed.  

 In this paper we consider three partisan variants of the impartial edge removal game presented in \cite{GameEdgeRemoval}.  Two players, Left and Right, play on a finite graph $G$.  A \textit{turn} for Left consists of removing a vertex and all incident edges while a \textit{turn} for Right consists of removing a single edge.  The variants differ based on the underlying principles which define what objects can be removed.  In section \ref{MixedGame} we will analyze the \textit{Classic Variant} which follows the model introduced by Gallant et. al. \cite{GameEdgeRemoval} which implies the game continues so long as the graph does not contain an isolated vertex.  In section \ref{ForbiddenGame}, the \textit{Forbidden Leaf Variant} will impose an additional restriction not allowing the deletion of leaf vertices.  Finally, in section \ref{MutualGame}, the \textit{Mutual Failures Variant} will impose additional restrictions which will allow Left the ability to delete a vertex from a subgraph if and only if Right has the ability to delete an edge from the same subgraph.  
 
 As in the original result of Gallant et. al. \cite{GameEdgeRemoval}, we will concern ourselves with finding which values of $n$ in a particular graph class can be won by a particular player, rather than the value of the game or the size of the advantage, although these would be interesting additions for future consideration.  


\section{Background and definitions}

We will adopt standard notation from game theory literature.  The theorems and notation introduced in this section can be found in \cite{LessonsInPlay} and \cite{WinningWays}.  

Since we are interested in a game with only two players, Left and Right, we will denote a game position, $G$, as $G=\game{\GL}{\GR}$, where $\GL$ and $\GR$ represent the move sets available to Left and Right respectively.  When Left moves from $G$ to some option $L\in\GL$, we will note her move $\leftmove{G}{L}$. Similarly, $\rightmove{G}{R}$ is a move by Right from $G$ to some $R\in\GR$.  The zero game will be denoted $G = \zerogame$, star will be denoted $\ast = \game{0}{0}$, far star will be denoted $\bigstar$, the game position $\game{0}{\ast}$ is defined as up $(\uparrow)$ and the negative of up, $\game{\ast}{0}$ is defined as down, $(\downarrow)$.  We will write $G\|H$ if $G$ is incomparable with $H$.  If $G>H$ or $G\|H$ we will denote this by $G \rhd H$. 

Since our analysis will be primarily focused on all-small games, we will provide a review of some useful definitions and theorems related to atomic weights.  

\subsection{Atomic Weights}\label{AtomicWeights}

When examining an all-small game, complicated infinitesimal positions are frequently encountered. Many of the analysis techniques used to make sense of a game as a whole are useless when applied to such positions. It then becomes desirable to relate a complicated position to one that is understood. In this way, the dominant features of a position can be equated to the features of a known position provided the error can be accommodated. We use an equivalence relation, far star, to do just this.

\begin{definition}\label{d:BG:FarStarEquiv}

We say a game position $G$ is far star equivalent to a game position $H$ if $G+X+\bigstar$ and $H+X+\bigstar$ have the same outcome for all game positions $X$. We denote this equivalence $\farstareq{G}{H}$.
\end{definition}

The following establishes a useful way to determine far star equivalence.

\begin{theorem}\label{t:BG:UsefulEquiv}

For game positions $G$ and $H$, $\farstareq{G}{H}$ if and only if ${\downarrow}{\bigstar} < G-H < {\uparrow}{\bigstar}$.
\end{theorem}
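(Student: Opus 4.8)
The plan is to prove both implications after a cosmetic reduction. Writing $D = G-H$ and reindexing the test position by $Y = H + X$ (which ranges over all positions as $X$ does), the definition of far star equivalence becomes the cleaner statement
\[
\farstareq{G}{H} \iff \Big(o(D + Y + \bigstar) = o(Y + \bigstar)\ \text{ for every position } Y\Big),
\]
where $o(\cdot)$ denotes the outcome class and $o(Z)$ is $\mathcal L,\mathcal R,\mathcal N,\mathcal P$ according as $Z > 0$, $Z < 0$, $Z \| 0$, $Z = 0$. The structural fact I would lean on throughout is that $o(W+\bigstar)$ is never $\mathcal P$ (a genuine position $W$ never equals the far star), so that $o(W+\bigstar)$ is $\mathcal L$, $\mathcal R$, or $\mathcal N$ exactly as $W>\bigstar$, $W<\bigstar$, or $W\|\bigstar$. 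I would also record the two elementary comparisons $\uparrow+\bigstar>0$ and $\downarrow+\bigstar<0$, equivalently $\downarrow<\bigstar<\uparrow$, which say that far star sits strictly between down and up.

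For the forward direction I would show that both halves of the sandwich are forced by just two test positions. Taking $Y=\downarrow$ gives $o(D+\downarrow+\bigstar)=o(\downarrow+\bigstar)=\mathcal R$, and since $o(W+\bigstar)=\mathcal R$ means $W+\bigstar<0$, this reads $D+\downarrow+\bigstar<0$, i.e. $D<\uparrow\bigstar$ (using $-\downarrow=\uparrow$ and $-\bigstar=\bigstar$). Symmetrically, $Y=\uparrow$ yields $o(D+\uparrow+\bigstar)=\mathcal L$, hence $D+\uparrow+\bigstar>0$, i.e. $D>\downarrow\bigstar$. Together these give $\downarrow\bigstar<D<\uparrow\bigstar$. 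This half uses nothing beyond the outcome dictionary and the two comparisons for $\uparrow+\bigstar$ and $\downarrow+\bigstar$.

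The backward direction is where the real content lies, and I expect it to be the main obstacle. Assuming $\downarrow\bigstar<D<\uparrow\bigstar$, I must show that adding $D$ never disturbs the far star comparison of any position, i.e. $D+Y$ and $Y$ compare to $\bigstar$ the same way for every $Y$. Order arithmetic alone is not enough: cancelling $\bigstar+\bigstar=0$ turns $D>\downarrow\bigstar$ into only $D+Y+\bigstar>\downarrow+Y$, and a barely positive $Y+\bigstar$ could in principle be flipped by an infinitesimal $D$ of comparable size. What must be exploited is the sheer size of the far star: the nimber hidden in $Y+\bigstar$ is a reserve large enough to absorb any move made inside the $D$-component. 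Concretely, I would give a strategy argument in which the winner of $Y+\bigstar$ copies that strategy in $D+Y+\bigstar$, answering every opponent move inside $D$ by the reply that $D>\downarrow\bigstar$ (so that the relevant player wins $D-\downarrow\bigstar$ moving second), respectively $D<\uparrow\bigstar$, guarantees, and spending a move in the far star to restore the position when needed. Packaged abstractly this is exactly the statement that a position squeezed strictly between $\downarrow\bigstar$ and $\uparrow\bigstar$ has atomic weight $0$ and is therefore far star equivalent to $0$; if I am willing to invoke the atomic weight calculus of \cite{LessonsInPlay, WinningWays} the sufficiency is immediate, and otherwise the strategy above supplies it directly. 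Finally, the $\mathcal N$ case ($Y\|\bigstar$) follows once the $\mathcal L$ and $\mathcal R$ cases are settled, and the symmetry $D\leftrightarrow -D$ (which preserves both hypothesis and conclusion, since $-\uparrow\bigstar=\downarrow\bigstar$) halves the work.
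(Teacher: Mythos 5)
The paper does not prove this theorem at all: it is imported as background from \cite{LessonsInPlay} and \cite{WinningWays}, so there is no in-paper argument to compare yours against. Judged on its own merits, your forward direction is complete and correct: the reindexing $Y=H+X$ is legitimate (outcomes depend only on game values, and $X=Y-H$ realizes any desired $Y$), and testing against $Y=\downarrow$ and $Y=\uparrow$ together with the standard facts $\uparrow+\bigstar>0$ and $\downarrow+\bigstar<0$ does force ${\downarrow}{\bigstar}<G-H<{\uparrow}{\bigstar}$.

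The backward direction, which you rightly identify as the real content, is not actually proved. The strategy-transfer sketch (copy the winning strategy on $Y+\bigstar$, answer moves inside $D$ using $D>{\downarrow}{\bigstar}$, spend a move in the far star to restore the position) has the right shape, but as written it is a plan rather than an argument: the delicate point is precisely how the single shared $\bigstar$ component can simultaneously serve the borrowed strategy on $Y+\bigstar$ and absorb the nimbers generated by play inside $D$, and nothing in your sketch pins that down. Moreover, your proposed shortcut---invoking the atomic weight calculus to conclude that a position squeezed between ${\downarrow}{\bigstar}$ and ${\uparrow}{\bigstar}$ has atomic weight $0$---is circular in the present development: Definition \ref{d:BG:AtomicWeight} defines atomic weight \emph{in terms of} far star equivalence, and Theorems \ref{t:BG:AtomicCalc} and \ref{t:BG:AtomicStar} sit downstream of the very criterion you are trying to establish. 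So the sufficiency half needs either the full strategy argument written out or an explicit appeal to the literature, which is in effect what the paper itself does by stating the theorem without proof.
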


Theorem \ref{t:BG:UsefulEquiv} provides a scale based upon $\uparrow$ that we can use to measure infinitesimals. Recall that despite being an infinitesimal and consequently less than all positive numbers, $\uparrow>0$. This makes $\uparrow$ a Left win, and similarly $\downarrow$ is a Right win. To be able to draw an equivalence between an infinitesimal position and some integer multiple of $\uparrow$ would allow us to measure how much an infinitesimal position favors a player.

\begin{definition}\label{d:BG:AtomicWeight}
For an integer $n$ and a game position $G$, the atomic weight of $G$ is $n$ if $\farstareq{G}{n\cdot \uparrow}$. We denote this AW$(G)=n$.
\end{definition}

The above definition restricts the atomic weight to an integer value, but there is no reason the atomic weight must be integer valued. We make this restriction as our usage of atomic weights will yield integer values.  In the above definition, $n\cdot\uparrow$ represents the game sum of $|n|$ copies of $\uparrow$ if $n$ is positive and $|n|$ copies of $\downarrow$ if $n$ is negative. Atomic weights are well defined, and because far star equivalence respects addition, atomic weights are also additive. 

\begin{theorem}\label{t:BG:AtomicAdd}

For game positions $G$ and $H$, 
$$\text{AW}(G)+\text{AW}(H)=\text{AW}(G+H).$$
\end{theorem}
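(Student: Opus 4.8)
The plan is to reduce the theorem to the fact that far star equivalence is a congruence with respect to game addition; once that is established, the conclusion is immediate because $m\cdot\uparrow + n\cdot\uparrow$ and $(m+n)\cdot\uparrow$ are equal games. Assuming both atomic weights are defined, write $\text{AW}(G)=m$ and $\text{AW}(H)=n$, so that $\farstareq{G}{m\cdot\uparrow}$ and $\farstareq{H}{n\cdot\uparrow}$ by Definition \ref{d:BG:AtomicWeight}. The goal is then to verify $\farstareq{G+H}{(m+n)\cdot\uparrow}$, which by Definition \ref{d:BG:AtomicWeight} yields $\text{AW}(G+H)=m+n$.

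First I would exploit the universal quantifier in Definition \ref{d:BG:FarStarEquiv}. Since $\farstareq{G}{m\cdot\uparrow}$ holds for every test game, I would apply it with the particular test game $H+X$ (for arbitrary $X$) to conclude that $G+(H+X)+\bigstar$ and $(m\cdot\uparrow)+(H+X)+\bigstar$ share the same outcome. Next I would apply $\farstareq{H}{n\cdot\uparrow}$ with the test game $(m\cdot\uparrow)+X$ to conclude that $H+((m\cdot\uparrow)+X)+\bigstar$ and $(n\cdot\uparrow)+((m\cdot\uparrow)+X)+\bigstar$ share the same outcome. Chaining these two outcome equalities — using commutativity of the game sum to line up the common middle term $(m\cdot\uparrow)+H+X+\bigstar$ — shows that $G+H+X+\bigstar$ and $(m+n)\cdot\uparrow+X+\bigstar$ have the same outcome. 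Since $X$ was arbitrary, this is exactly $\farstareq{G+H}{(m+n)\cdot\uparrow}$.

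The step I expect to require the most care is keeping the far star honest: $\bigstar$ is a remote star rather than an ordinary game position, so I must ensure that every expression carries exactly one copy of $\bigstar$ and that no second copy is ever introduced by the substitutions. The substitutions above are arranged precisely so that the single $\bigstar$ is preserved throughout; had I instead tried to add the two defining inequalities supplied by Theorem \ref{t:BG:UsefulEquiv}, I would have been forced to interpret $\bigstar+\bigstar$, which is the obstacle this definitional route sidesteps. I would also note that the chaining tacitly uses transitivity of far star equivalence, which follows immediately from the `same outcome for all $X$' formulation, together with the identity $m\cdot\uparrow+n\cdot\uparrow=(m+n)\cdot\uparrow$ that absorbs any cancellation between up and down when $m$ and $n$ have opposite signs.
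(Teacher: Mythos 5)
Your argument is correct, and it is precisely the route the paper itself gestures at: the paper does not prove this background theorem but justifies it with the single remark that ``far star equivalence respects addition,'' which is exactly the congruence property you establish by substituting $H+X$ and $(m\cdot\uparrow)+X$ for the universally quantified test game. Your handling of the single copy of $\bigstar$ and of the value identity $m\cdot\uparrow+n\cdot\uparrow=(m+n)\cdot\uparrow$ is sound, so nothing further is needed.
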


Clearly, this result allows for the atomic weight of a game sum to be easily computed by summing the atomic weights of each summand. We now present the result that makes atomic weights the most effective tool for solving infinitesimal positions.

\begin{theorem}\label{t:BG:TwoAhead}
\doublespacing
For a game position $G$,

\begin{align*}
\text{if AW}(G)&\geq 2 \text{, then }G>0, \\[-10pt]
\text{if AW}(G)&\geq 1 \text{, then }G \rhd 0, \\[-10pt]
\text{if AW}(G)&\leq -1 \text{, then }G \lhd 0, \\[-10pt]
\text{and if AW}(G)&\leq -2 \text{, then }G<0.
\end{align*}

\end{theorem}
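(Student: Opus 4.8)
The plan is to convert the atomic-weight hypothesis into an ordinary order comparison through Theorem \ref{t:BG:UsefulEquiv}, and then read off the outcome of $G$ from the sign of a multiple of $\uparrow$ together with the known behaviour of far star. Write $\text{AW}(G)=n$, so that $\farstareq{G}{n\cdot\uparrow}$. By Theorem \ref{t:BG:UsefulEquiv} this is equivalent to ${\downarrow}{\bigstar}<G-n\cdot\uparrow<{\uparrow}{\bigstar}$, and I would only need the left inequality: adding $n\cdot\uparrow$ to both sides and using $\downarrow=-\uparrow$ gives $G>(n-1)\uparrow+\bigstar$. The whole argument then reduces to reading the sign of the reference game $(n-1)\uparrow+\bigstar$, for which I would invoke two standard facts about far star recalled in Section \ref{AtomicWeights}: that $\bigstar\|0$ and that ${\uparrow}{\bigstar}>0$ (a single $\uparrow$ already overcomes far star), together with $\uparrow>0$.

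For the case $n\geq 1$, since $(n-1)\uparrow\geq 0$ we have $(n-1)\uparrow+\bigstar\geq\bigstar$, hence $G>\bigstar$; because $\bigstar\|0$, neither $G<0$ nor $G=0$ is possible, as either would force $\bigstar<0$, so $G\rhd 0$. For the case $n\geq 2$ I would instead split off one copy of $\uparrow$ and write $(n-1)\uparrow+\bigstar=(n-2)\uparrow+{\uparrow}{\bigstar}$; since $(n-2)\uparrow\geq 0$ and ${\uparrow}{\bigstar}>0$, this reference game is strictly positive, whence $G>0$. This is precisely where the ``two ahead'' threshold appears: one $\uparrow$ is consumed neutralising far star, yielding only $G\rhd 0$, and it is the second $\uparrow$ that certifies a genuine win.

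The two negative cases follow by negation. Applying Theorem \ref{t:BG:AtomicAdd} to $H=-G$, together with $\text{AW}(0)=0$, gives $\text{AW}(-G)=-\text{AW}(G)$. If $\text{AW}(G)\leq -1$ then $\text{AW}(-G)\geq 1$, so by the above $-G\rhd 0$ and therefore $G\lhd 0$; if $\text{AW}(G)\leq -2$ then $\text{AW}(-G)\geq 2$, so $-G>0$ and hence $G<0$.

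The main obstacle I anticipate is not the algebra but pinning down the sign facts for far star: the argument hinges entirely on $\bigstar\|0$ and ${\uparrow}{\bigstar}>0$, and I must be sure these are available independently of the two-ahead rule itself, since otherwise the reasoning would be circular. Once those are secured, the only care needed is to keep the inequalities from Theorem \ref{t:BG:UsefulEquiv} strict and to distinguish the $n=1$ case, which yields only $G\rhd 0$, from $n\geq 2$, which yields the strict $G>0$.
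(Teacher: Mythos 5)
The paper does not actually prove this statement; it is quoted as background (the \emph{Two Ahead Rule}) from \cite{LessonsInPlay} and \cite{WinningWays}, so there is no in-paper proof to compare against. Your argument is correct and is essentially the standard derivation: translate $\text{AW}(G)=n$ through Theorem \ref{t:BG:UsefulEquiv} into the strict inequality $G>(n-1)\uparrow+\bigstar$, read off the sign of that reference position, and obtain the negative cases by symmetry via $\text{AW}(-G)=-\text{AW}(G)$. The two facts you lean on, $\bigstar\,\|\,0$ and ${\uparrow}{\bigstar}>0$, are independently verifiable by direct play (every option of $\bigstar$ is a nimber, and $\uparrow+\ast n>0$ for all $n\neq 1$ while Left wins ${\uparrow}{\ast}$ moving first), so the circularity you flag does not in fact arise.
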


This theorem is known as the \textit{Two Ahead Rule}. If the atomic weight is at least two in favor of a player, that player then has sufficient advantage in the position to win regardless which player moves first. While it is possible to compute the atomic weight of a position $G$ using only the definition of atomic weight, the following theorem, known as the \textit{Atomic Weight Calculus}, provides a method to compute the atomic weight using only the atomic weights of the positions in $G$. 

\begin{theorem}\label{t:BG:AtomicCalc}
\doublespacing
For a game position $G=\game{L_1,L_2,L_3,\cdots}{R_1,R_2,R_3,\ldots}$ if $L_i$ and $R_i$ have atomic weights for all $i$, then 
\begin{align*}
\text{AW}(G)=\game{\text{AW}(L_1)-2,\text{AW}(L_2)-2,\ldots}{\text{AW}(R_1)+2,\text{AW}(R_2)+2,\ldots}
\end{align*}
unless this is an integer. If this is is an integer, let $x$ be the least integer such that,
\begin{align*}
\text{AW}(L_1)-2\lhd x,\text{AW}(L_2)-2\lhd x,\ldots
\end{align*}
and let $y$ be the greatest integer such that,
\begin{align*}
\text{AW}(R_1)+2\rhd y,\text{AW}(R_2)+2\rhd y,\ldots.
\end{align*}
Consider $G$.
\begin{align*}
\text{If } G~\|~\bigstar \text{, then AW}(G)=0. \\[-10pt]
\text{If } G>\bigstar \text{, then AW}(G)=y. \\[-10pt]
\text{If } G<\bigstar \text{, then AW}(G)=x.
\end{align*}
\end{theorem}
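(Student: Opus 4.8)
The plan is to prove the calculus by induction on the formation (game-tree depth) of $G$, taking as inductive hypothesis that the calculus — and hence an integer atomic weight — is already known for every option $L_i$ and $R_j$. The engine of the whole argument is the characterization of far star from Theorem~\ref{t:BG:UsefulEquiv} together with Definition~\ref{d:BG:AtomicWeight}: to certify that $\text{AW}(G)=n$ for a candidate integer $n$, it suffices to verify the sandwich ${\downarrow}{\bigstar} < G - n\cdot{\uparrow} < {\uparrow}{\bigstar}$, since this is exactly $\farstareq{G}{n\cdot{\uparrow}}$. Thus every clause of the theorem reduces to checking that the integer output by the stated procedure is the one (and, by well-definedness of atomic weight, the only one) making this double inequality hold. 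I would also record at the outset the two facts I will lean on repeatedly: additivity (Theorem~\ref{t:BG:AtomicAdd}) and the Two Ahead Rule (Theorem~\ref{t:BG:TwoAhead}), which converts atomic-weight bounds into genuine order relations on the difference games.

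First I would treat the generic case, in which the naive expression $v = \game{\text{AW}(L_1)-2,\ldots}{\text{AW}(R_1)+2,\ldots}$ is not an integer; here I would show that this value $v$ is itself the atomic weight of $G$. The verification again goes through far star: using the inductive hypotheses $\farstareq{L_i}{\text{AW}(L_i)\cdot{\uparrow}}$ and $\farstareq{R_j}{\text{AW}(R_j)\cdot{\uparrow}}$, I would compare $G$ option-by-option against the game built from $v$ and confirm that the two are far-star equivalent. The role of the offsets $\pm 2$ is precisely to supply the two-move cushion that a remote star $\bigstar$ can cost a player: when one player moves in $G$, the opponent's best reply must still be dominated after accounting for one ``wasted'' move on $\bigstar$ and the unit shift carried by $\uparrow$, and the Two Ahead Rule (Theorem~\ref{t:BG:TwoAhead}) is exactly what licenses this bookkeeping. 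The computation is routine once the strategies are pinned down, so I would not grind through it.

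Next I would handle the eccentric case, where $v$ collapses to an integer and the naive formula is inconclusive. I would form $x$ (the least integer with $\text{AW}(L_i)-2 \lhd x$ for all $i$) and $y$ (the greatest integer with $\text{AW}(R_j)+2 \rhd y$ for all $j$), so that the only viable candidate atomic weights are $x$, $y$, and $0$. The resolution is then forced by a single comparison of $G$ against $\bigstar$, which detects the one unit of slack the options leave undetermined. If $G \| \bigstar$, neither player is two ahead on the far-star scale and the sandwich holds with $n=0$; if $G > \bigstar$, the slack falls in Left's favor and the sandwich holds with $n=y$; if $G < \bigstar$, symmetrically $n=x$. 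In each subcase I would verify ${\downarrow}{\bigstar} < G - n\cdot{\uparrow} < {\uparrow}{\bigstar}$ with the corresponding integer $n$, again via explicit strategies fed by the inductive hypothesis.

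The main obstacle is the eccentric case, and within it the proof that the comparison of $G$ with $\bigstar$ is the correct and complete tie-breaker — that is, that $\{0,x,y\}$ really are the only possibilities and that the remote star neither helps nor hurts a player by more than the single move of slack being measured. This is where the $\pm 2$ offsets must be justified in full rather than quoted, and it rests on the structural facts that ${\uparrow}{\bigstar} > 0$ while contributing essentially no atomic weight, so that a lone $\bigstar$ can be neutralized in one move. I would establish these alongside the induction and cross-check the resulting strategies against the treatment in \cite{WinningWays}.
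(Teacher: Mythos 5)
First, a framing point: the paper does not prove this statement. Theorem~\ref{t:BG:AtomicCalc} is quoted as background from \cite{WinningWays} and \cite{LessonsInPlay}, and is used downstream as a black box, so there is no in-paper proof to compare yours against. Judged on its own terms, your proposal has the right overall shape for the standard argument --- induction on the game tree, with each candidate value certified through the far-star sandwich of Theorem~\ref{t:BG:UsefulEquiv} --- but it is an outline rather than a proof: every step that carries the actual mathematical content is deferred.

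Two gaps are genuine rather than cosmetic. In the generic case, the entire content of the Atomic Weight Calculus is that the offsets are exactly $\pm 2$; your justification (``one wasted move on $\bigstar$ plus the unit shift carried by $\uparrow$'') is a heuristic, and declaring the computation ``routine once the strategies are pinned down'' concedes the theorem, since pinning down those strategies \emph{is} the theorem. Worse, when the naive value $v=\game{\text{AW}(L_1)-2,\ldots}{\text{AW}(R_1)+2,\ldots}$ is not an integer, the object $v\cdot{\uparrow}$ that your sandwich ${\downarrow}{\bigstar}<G-v\cdot{\uparrow}<{\uparrow}{\bigstar}$ requires is not defined anywhere in the paper (Definition~\ref{d:BG:AtomicWeight} restricts atomic weight to integers, and $n\cdot{\uparrow}$ is only explained for integer $n$); a correct proof must first introduce the Norton-multiple construction that gives $v\cdot{\uparrow}$ meaning, or else recast the generic case entirely. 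In the eccentric case, the claim that the only candidates are $x$, $y$, and $0$, and that a single comparison of $G$ with $\bigstar$ correctly selects among them, is precisely the hard part; you assert it and defer to ``cross-checking against \cite{WinningWays},'' which is a citation, not an argument. As written, the proposal would need the generic-case strategy analysis and the eccentric-case trichotomy proved in full before it could stand as a proof.
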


In most situations, the exception is invoked and it is necessary to compare $G$ to $\bigstar$. We will see this later in the analysis of the Mutual Failures Variant. There is one final useful result regarding the relation between a position's atomic weight and far star.

\begin{theorem}\label{t:BG:AtomicStar}
\doublespacing
For a position, $G$, $G+\bigstar>0$ exactly when AW$(G)\geq 1$.
\end{theorem}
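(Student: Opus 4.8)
The plan is to reduce the general statement to a single computation about integer multiples of $\uparrow$, and then to settle that computation by hand. Since we only treat positions having an integer atomic weight, write $n = \text{AW}(G)$; by Definition \ref{d:BG:AtomicWeight} this means $\farstareq{G}{n\cdot\uparrow}$. The key first step is to invoke Definition \ref{d:BG:FarStarEquiv} with the particular choice $X = 0$: then $G + \bigstar$ and $n\cdot\uparrow + \bigstar$ have the same outcome, and since ``$>0$'' is exactly the assertion that Left wins moving either first or second, we obtain $G + \bigstar > 0$ if and only if $n\cdot\uparrow + \bigstar > 0$. This move is crucial because it strips away all dependence on the (possibly complicated) structure of $G$ and leaves a single family of positions to analyze; in particular it lets us avoid ever computing the atomic weight of $\bigstar$ itself.

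It then remains to prove the lemma that $n\cdot\uparrow + \bigstar > 0$ exactly when $n \geq 1$, which I would split into the base case $n=1$, the case $n \geq 2$, and the cases $n \leq 0$. The case $n \geq 2$ follows from the base case together with the fact that a sum of positive positions is positive, writing $n\cdot\uparrow + \bigstar = (\uparrow + \bigstar) + (n-1)\cdot\uparrow$ with $(n-1)\cdot\uparrow > 0$. The case $n = 0$ is immediate, since $\bigstar$ is a nonzero nimber and hence confused with $0$, so $\bigstar \not> 0$. The cases $n \leq -1$ reduce to the positive cases by negation: because $\bigstar$ is its own negative and $n\cdot\uparrow = -\bigl(|n|\cdot\uparrow\bigr)$, we have $n\cdot\uparrow + \bigstar = -\bigl(|n|\cdot\uparrow + \bigstar\bigr) < 0$, using $|n|\cdot\uparrow + \bigstar > 0$ from the already-established positive cases.

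The main obstacle, and the only genuinely computational step, is the base case $\uparrow + \bigstar > 0$. I would argue directly on the game tree, using that $\bigstar$ stands for a nimber $\ast N$ of sufficiently large index, in particular with $N \neq 1$. If Left moves first, she collapses the star component to $0$, leaving $\uparrow > 0$ with Right to move, so Left wins. If Right moves first, each of his options is a Left win: moving in $\uparrow$ to $\ast$ leaves $\ast + \bigstar$, a nonzero nimber (as $N\neq 1$) confused with $0$, from which Left on move wins by moving to $0$; while moving the star component to a smaller star $\ast m$ leaves $\uparrow + \ast m$, from which Left on move again wins by collapsing the star to $0$. Hence Left wins regardless of who starts, giving $\uparrow + \bigstar > 0$. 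The care required here is precisely why far star, rather than an ordinary star, is needed: for $\ast$ itself one has the fuzzy position $\upstar \,\|\, 0$, and it is the remoteness $N \neq 1$ that eliminates this boundary ambiguity. Chaining the three displayed equivalences then yields $G + \bigstar > 0 \iff \text{AW}(G) \geq 1$.
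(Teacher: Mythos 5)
The paper does not prove this statement at all: Theorem \ref{t:BG:AtomicStar} is quoted as background from \cite{LessonsInPlay} and \cite{WinningWays}, so there is no in-paper argument to compare against. Your proposal supplies a correct, self-contained derivation, and the structure is sound: since $\farstareq{G}{n\cdot\uparrow}$ by Definition \ref{d:BG:AtomicWeight}, taking $X=0$ in Definition \ref{d:BG:FarStarEquiv} shows $G+\bigstar$ and $n\cdot\uparrow+\bigstar$ share an outcome class, and ``$>0$'' is exactly membership in the class where Left wins going first or second, so the reduction to $n\cdot\uparrow+\bigstar$ is legitimate. The case analysis on $n$ is also right, and your treatment of $\bigstar$ as $\ast N$ with $N\geq 2$ matches the paper's own convention (it fixes $\bigstar=\ast 2$ in Example \ref{ex:MF:BaseCases}). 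Two small remarks. First, in the base case, when Right moves the heap $\ast N$ all the way to $0$ the resulting position is $\uparrow$ with Left to move, and Left wins by moving $\uparrow$ to $0$ rather than by ``collapsing the star''; the conclusion is unaffected but the stated move does not literally exist there. Second, your restriction to integer atomic weights is exactly the paper's restriction in Definition \ref{d:BG:AtomicWeight}, but it is worth being aware that the general form of this result in the literature reads $G+\bigstar>0$ iff $\text{AW}(G)\rhd 0$, which coincides with $\text{AW}(G)\geq 1$ only because the atomic weights here are integers. Neither point is a gap; the proof stands.
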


\hidethis{
LONG Version

We will adopt standard notation from game theory literature.  The theorems and notation introduced in this section can be found in \cite{LessonsInPlay} and \cite{WinningWays}.  

Since we are interested in a game with only two players, Left and Right, we will denote a game position, $G$, as $G=\game{\GL}{\GR}$, where $\GL$ and $\GR$ represent the move sets available to Left and Right respectively.  The zero game will be denoted $G = \zerogame$.  When Left moves from $G$ to some option $L\in\GL$, we will note her move $\leftmove{G}{L}$ and similarly for Right.

We will use the natural equivalence relation on games, denoted by $G = H$, in addition to a sum operator, $G+H$ and a negation, $-G$ which form an abelian group with $G=\zerogame$ as the identity.  We will also use the standard numbering by Conway in \cite{OnNumbersAndGames}, thus $\zerogame$ is defined as the number $0$.  A game position with moves for Left alone takes a positive integer value while a game position with moves for Right alone takes a negative integer value.  Left wins first moving first and second if and only if $G>0$ and Right wins first moving first and second if and only if $G<0$. 

The ordering is a partial ordering because of incomparable games.  For example, if the first player to move on $G$ will win.  Then neither $G \geq 0$ nor $G \leq 0$, so we say $G$ is incomparable with $0$.  

\begin{definition}
\doublespacing
For game positions, $G$ and $H$, if neither $G\geq H$ nor $G\leq H$, then $G$ and $H$ are incomparable. We also say $G$ is confused with $H$. This is denoted $G\|H$. If $G>Had$ or $G\|H$, we denote this $G \rhd H$. 
\end{definition}
\Comment{Do we need this definition?}

Game positions which are confused with $0$ are called infinitesimal.  A game position, $G$, is all-small if for every position $H$ in $G$, Left can move from $H$ if and only if Right can.  The game position $G=\game{0}{0}$ is defined as the value $\ast$ named star. Notice $-\ast=\ast$ and whichever player moves first in $\ast$ will win.

}

\subsection{Graph Theory}

We will adopt the graph theory notation and definitions from \cite{GraphTheory}.   We will denote a graph $\graph{G} := (V, E)$ such that $V$ is a set of vertices, and $E$ is a set of edges.  We will assume throughout that $\graph{G}$ is a simple undirected graph.  For any vertex $v$, let $\delta(v)$ be the degree of $v$ in $G$.  A vertex with degree zero is called an isolated vertex and a vertex of degree one is called a leaf.  We will let $\path{n}$ denote a path graph on $n$ vertices and $\complete{n}$ be a complete graph on $n$ vertices. 

Our games will require players to make deletions from a graph thus many of the possible game positions are disconnected graphs. These disconnected positions can be considered a game sum of connected positions. As such, we will abuse notation and refer to the positions of a game sum as the components of the game sum for they are in fact the components of the overall graph position. Further, it is desirable to have a clear but related notation to distinguish between graphs and game positions. Thus, we will notate a graph class as $\graph{X}$ and the game positions played on that class as $X$.


\section{Results}

\subsection{The Mixed Deletion Game} \label{MixedGame}

\begin{definition}\label{d:Cl:BaseRules}
The Mixed Deletion Game (Classic Variant) is a combinatorial game defined by the following rules:
\begin{itemize}
\item The game is played on a finite graph.
\item There are two players, Left and Right, who alternate turns.
\item Left must delete one vertex and all incident edges from the graph.
\item Right must delete one edge.
\item The game ends when a deletion creates an isolated vertex.
\item The player whose deletion created an isolated vertex loses.
\end{itemize}
\end{definition}

Notice in path graphs there is an inherent bias towards Left.  Since any path position will only be broken into smaller path position components, the ability for only Left to be able to move on $P_3$ causes any $\path{3}$ component to act as a ``free move'' for Left in the game sum of path position components.  No such ``free move'' path exists for Right, denying him the ability to counter the existence of a $P_3$ component.  This implies there is no path position $P_n$, with a negative integer or dyadic rational value.  To more formalize this argument, consider the following proposition: 

\begin{prop}\label{p:Cl:LeftBias}

There is no graph that contains a move for Right and no move for Left in the Classic Variant.
\end{prop}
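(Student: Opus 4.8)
The plan is to recast the claim in terms of which deletions are \emph{legal}, i.e.\ do not produce an isolated vertex, and then to prove the contrapositive: if Left has no legal move on a position $\graph{G}$ (a graph with no isolated vertices, as all reachable positions are), then Right has no legal move either. First I would record the two relevant characterizations. Since deleting an edge $uv$ only lowers the degrees of $u$ and $v$ by one and leaves every other vertex untouched, Right's deletion of $uv$ is legal exactly when $\delta(u)\geq 2$ and $\delta(v)\geq 2$. For Left, deleting a vertex $w$ can only isolate some $x\neq w$ if every edge at $x$ met $w$, that is, if $x$ is a leaf whose unique neighbor is $w$; hence Left's deletion of $w$ is legal exactly when $w$ has no leaf neighbor.

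With these in hand, the hypothesis that Left has \emph{no} legal move becomes the statement that every vertex of $\graph{G}$ is adjacent to a leaf. The heart of the argument is to show this forces $\graph{G}$ to be a disjoint union of copies of $\complete{2}$. I would take an arbitrary vertex $v$ and let $\ell$ be a leaf adjacent to it; applying the hypothesis to $\ell$ itself, $\ell$ must also be adjacent to a leaf, but $\ell$ has only the single neighbor $v$, so $v$ is a leaf. As $v$ was arbitrary, every vertex has degree $1$, so $\graph{G}$ is $1$-regular and each of its components is a single edge.

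Finally, in a disjoint union of copies of $\complete{2}$ every edge $uv$ satisfies $\delta(u)=\delta(v)=1$, so deleting any edge isolates both endpoints and Right has no legal move. This establishes the contrapositive and hence the proposition. I expect the main obstacle to be the middle step: correctly arguing that ``every vertex has a leaf neighbor'' collapses all the way to ``every vertex is a leaf,'' rather than merely constraining the graph in some weaker way. The subtlety is that one must apply the leaf-neighbor condition to the leaves themselves and exploit that a leaf has a unique neighbor in order to force that neighbor to be a leaf as well. I would also be careful to state at the outset the standing assumption that game positions contain no isolated vertices, since the degree bookkeeping underlying both characterizations depends on it.
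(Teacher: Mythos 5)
Your proof is correct, but it runs in the opposite logical direction from the paper's and yields a bit more. The paper argues directly: given an edge $\{a,b\}$ with $\delta(a),\delta(b)>1$ witnessing Right's move, it picks a neighbor $v'\neq b$ of $a$ and exhibits a Left move by cases on $\delta(v')$ (delete $v'$ if it is a leaf, delete $a$ otherwise). You instead prove the contrapositive and classify the positions where Left is stuck: ``every vertex has a leaf neighbor'' collapses, via applying the condition to a leaf itself, to ``every vertex is a leaf,'' so such a graph is a disjoint union of copies of $\complete{2}$, on which Right plainly cannot move. Your route costs a little more setup (the two legality characterizations) but buys a complete description of Left's failure states, which is in the spirit of the Mutual Failures discussion later in the paper; it also quietly avoids a small imprecision in the paper's second case, where deleting $a$ is declared legal after checking only that $b$ and $v'$ are not leaves, even though $a$ could have a different leaf neighbor $w$ (the fix is immediate -- Left deletes $w$ instead -- and your argument never encounters the issue). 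Your explicit remark that positions carry no isolated vertices is the right standing assumption and matches what the paper leaves implicit.
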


\begin{proof}
Let $\graph{G}$ be a graph. Assume there exists a move for Right. This implies the existence of an edge, $\{a,b\}$, such that $\delta(a),\delta(b)>1$. Thus, $a$ must be adjacent to some vertex $v'\neq b$. Consider the degree of $v'$. If $\delta(v')=1$, then Left can legally delete $v'$. If $\delta(v')>1$, then Left can legally delete $a$. Therefore, if Right has a legal move in a graph position, then Left must also have a legal move.

\end{proof}

We can apply the logic of Proposition \ref{p:Cl:LeftBias} to path graph positions and find that it is impossible to have a path position with a negative numeric value. 

\begin{corr}\label{c:Cl:NoNegInt}
There is no path position, $P_n$, with a negative integer or dyadic rational value.
\end{corr}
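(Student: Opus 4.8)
The plan is to prove the stronger statement that \emph{no} path position has value strictly below $0$, since every negative integer or dyadic rational $v$ satisfies $v<0$; if some $P_n$ equaled such a $v$ we would have $P_n<0$. I would work in the class of \emph{path positions}, by which I mean any finite disjoint union of paths on at least two vertices (together with the empty/terminal position of value $0$). This class is closed under the moves of both players: deleting a vertex (with its edges) or a single edge from a disjoint union of paths again yields a disjoint union of paths, and by definition of the game a legal deletion never produces an isolated vertex, so every component stays a path on at least two vertices. I would then argue by strong induction on the total number of vertices that every path position $G$ satisfies $G\not<0$.

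For the inductive step, recall that $G<0$ means Right wins whether he moves first or second, i.e.\ Left loses both as first and as second player. Suppose, for contradiction, that $G$ is a minimal path position with $G<0$. Because Left loses moving first, every Left option $G^L$ fails to satisfy $G^L\ge 0$; by the induction hypothesis no option is $<0$, so each Left option must satisfy $G^L\|0$. Because Left loses moving second, some Right option $G^R$ leaves Left losing when she moves first on it, so $G^R\le 0$; again the induction hypothesis rules out $G^R<0$, forcing $G^R=0$. Finally, since $G<0$ Right has a winning first move, so by Proposition \ref{p:Cl:LeftBias} Left also has a move, confirming the Left option set is nonempty. Thus a minimal counterexample would be a path position in which \emph{every} Left option is confused with $0$ while \emph{some} Right option equals $0$, and it remains to show these cannot coexist.

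To derive the contradiction I would localise at the edge whose deletion produces the value‑$0$ Right option: that deletion splits one path component $P_k$, on vertices $1,\dots,k$, at an interior edge $\{i,i+1\}$ with $2\le i\le k-2$. I would then compare it with the Left vertex‑deletions available at the same spot, namely deleting vertex $i$ or vertex $i+1$, mirroring the proof of Proposition \ref{p:Cl:LeftBias} in which a Right‑movable edge was matched to a Left‑movable vertex. The aim is to exhibit a Left option that is $\ge 0$, contradicting that all Left options are confused with $0$. The main obstacle is exactly this matching: the two natural Left responses only replace a component $P_m$ by $P_{m-1}$, and consecutive path values are \emph{not} comparable (already $P_3\|P_4$), so one cannot transfer the value‑$0$ conclusion from Right's option to a Left option by monotonicity. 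I expect that closing this gap requires either an auxiliary lemma recording enough about the outcomes of the small components produced, or a global pairing strategy for Left that turns the ``free move'' heuristic — a $\path{3}$ component is an uncontested Left move with no Right analogue — into an explicit strategy; the bookkeeping for that pairing, rather than the reduction above, is where the real work lies.
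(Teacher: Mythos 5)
Your reduction is sound as far as it goes: if some $P_n$ equalled a negative number it would satisfy $P_n<0$; the class of disjoint unions of paths is closed under both players' moves; and your minimal-counterexample analysis correctly forces every Left option of a minimal $G<0$ to be confused with $0$ while some Right option equals $0$. But the argument stops exactly where the content begins. You never rule out that configuration, and you say yourself that the ``real work'' — the auxiliary lemma or pairing strategy needed to produce a Left option $\geq 0$ — is missing. As written this is a reduction, not a proof. The obstruction you run into is genuine: there is no monotonicity between consecutive path values (e.g.\ $P_2=0<1=P_3$; note also that your cited example is wrong, since $P_3=1$ and $P_4=\tfrac12$ are comparable numbers, though the larger point survives). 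Worse, your strengthened target ``no path position is $<0$'' essentially amounts to knowing the outcome of every sum of paths, which is the content of Theorem \ref{t:Paths} — a result the paper proves \emph{after}, and \emph{using}, this corollary. So the route is both incomplete and, if completed by brute force, backwards relative to the paper's development.

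The paper takes a much shorter path that sidesteps outcome analysis entirely: it argues through canonical forms. A position of value $-k$ with $k\in\Z^+$ is $\game{}{-(k-1)}$, i.e.\ it has no Left option while retaining a Right option; but the only path position with $\GL=\emptyset$ is $P_2$, where $\GR=\emptyset$ as well (this is Proposition \ref{p:Cl:LeftBias} specialized to paths), so no negative integer value can occur. Negative dyadic rationals are then reduced to the integer case via $-\tfrac{1}{2}=\game{-1}{0}$ and the mixed-number decomposition. If you want to keep a strategy-flavoured argument, the right target is not ``no path position is $<0$'' but the weaker and sufficient bound $P_n>x$ for every negative number $x$: by Proposition \ref{p:Cl:LeftBias} Left can always answer a Right move made inside the graph component, and Right's moves in the canonical form of a positive number only increase it, so the standard argument that such one-sided dicotic positions exceed every negative number applies. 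That yields the corollary directly, with no need to compare $P_{m-1}$ to $P_m$ and no circularity.
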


\begin{proof}

On $\path{{n\geq 3}}$, Left can delete a leaf vertex without creating an isolated vertex, so the only path position without a move for Left is $P_2$. However, Right also has no move in $P_2$. Thus, in $P_n$ if $\GL=\emptyset$, then $\GR=\emptyset$. Thus, it is impossible for $k\in\Z^+$ to exist such that, $P_n=-k=\game{}{-(k-1)}$. Therefore, there does not exist any path position with an integer value less than zero. 

Further, for $j>0$ and an odd $q\in\Z$, the game value $\frac{q}{2^j}=\game{\frac{q-1}{2^j}}{\frac{q+1}{2^j}}$. So any negative dyadic rational for which $|q|<2^j$ implies the existence of a position $-\frac{1}{2}=\game{-1}{0}$, but as shown above there is no path position valued $-1$. Notice in the case where $|q| \geq 2^j$, we can express $\frac{q}{2^j}$ as a mixed number requiring both a negative integer value and a dyadic rational between $-1$ and $0$, both of which have already been shown to be impossible values for a path position. Therefore, there does not exist any path position with a negative dyadic rational value.

\end{proof}

Corollary \ref{c:Cl:NoNegInt} implies Right's inability to counter Left's ``free moves.'' Because Left can move $\leftmove{P_3}{P_2}$ and Right cannot move on $P_3$, every $P_3$ component is worth a game value of $1$. If Left adopts a strategy of creating a $P_3$ component with every one of her moves and then moving on the $P_3$ components only when no more can be created, Right would need to create a component of equal and opposite game value to counter Left's strategy. Corollary \ref{c:Cl:NoNegInt} shows this is impossible. Hence, Left will be the winner of $P_n$ for $n$ sufficiently large.

\begin{theorem}\label{t:Paths}
For an integer $n \geq 5$, Left will win $P_n$ in the Classic Variant regardless of which player moves first.
\end{theorem}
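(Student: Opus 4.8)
The plan is to prove the sharper statement $P_n > 0$ for every integer $n \geq 5$, which is exactly the claim that Left wins no matter who moves first. I would argue by strong induction on $n$, after recording the values of the small path positions that drive the arithmetic: a direct check gives $P_2 = \zerogame = 0$, $P_3 = \game{0}{} = 1$, and $P_4 = \game{1}{0}$, the last of which is confused with $0$ (a first-player win). The one base computation I would do by hand is $P_5 > 0$: Left's options are $P_4$ and $P_2 + P_2 = 0$, Right's options both equal $P_3 + P_2 = 1$, and one checks that Left wins with either player to move.

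For the inductive step fix $n \geq 6$ and assume $P_k > 0$ for all $5 \leq k < n$. Since $P_n > 0$ is equivalent to Left winning both as the first and as the second player, I would establish these separately. That Left wins moving first is immediate: she deletes an interior vertex to split off a copy of $P_3$, say $\leftmove{P_n}{P_3 + P_{n-4}}$, and since $P_3 = 1$ the resulting position is $1 + P_{n-4} \geq 0$, so Right, now to move, loses.

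The substance is showing Left wins moving second, i.e. that every Right option is a position in which Left (to move) wins. Each Right move deletes an edge and yields a two-component sum $P_i + P_{n-i}$ with $2 \leq i \leq n-2$; I would case-split on $m = \min(i, n-i)$. If $m \geq 5$ both components are positive by induction and the sum is positive. If $m = 2$ the sum is $P_{n-2}$, which is $\rhd 0$ (it is either $P_4$, confused with $0$, or positive by induction); if $m = 3$ the sum is $1 + P_{n-3} > 0$. The one delicate case, and the step I expect to be the main obstacle, is $m = 4$, where the sum is $\game{1}{0} + P_{n-4}$.

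The difficulty in the $m = 4$ case is structural: because $P_4 = \game{1}{0}$ is confused with $0$ rather than nonnegative, one cannot conclude that a sum of path components is nonnegative merely from the absence of a negative summand — additivity of the order fails at exactly this value. The resolution is that Left, as the player to move, can always move the $P_4$ component to the integer $1$ via $\leftmove{P_4}{P_3}$, landing in $1 + P_{n-4} \geq 0$ and winning as the mover. This mechanism — every $P_4$ hides a Left option worth $+1$, while by Corollary \ref{c:Cl:NoNegInt} no path component is ever a negative number that Right could use as a counterweight — is the precise form of the informal ``free move'' strategy described above, and it is what keeps each of Right's splits a Left win. A final remark: one could even absorb $n = 5$ into the inductive step by using $\leftmove{P_5}{P_2 + P_2} = 0$ as Left's nonnegative first move there, so that the only external inputs are the values of $P_2$, $P_3$, and $P_4$.
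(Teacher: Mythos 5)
Your proof is correct and lands in the right place, but it is organized rather differently from the paper's. You prove the sharper statement $P_n>0$ by strong induction and then exhaustively classify Right's replies $P_i+P_{n-i}$ by $m=\min(i,n-i)$: the case $m\geq 5$ is killed by the inductive hypothesis plus the fact that a sum of positive games is positive, $m\in\{2,3\}$ by the explicit values $P_2=0$ and $P_3=1$, and the genuinely delicate case $m=4$ by having Left convert the $P_4$ component to the integer $1$. The paper never formalizes $P_n>0$; for the second-player case it instead prunes Right's options to the two ``non-obviously-losing'' moves $P_{n-2}$ and $P_5+P_{n-5}$ (any other split hands Left a $P_3$ immediately or on her next turn) and checks only those, with $P_5+P_4$ and $P_5+P_5$ done by hand. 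Your version is more mechanical and, in one respect, tighter: for $n\geq 9$ the inductive hypothesis gives $P_{n-4}>0$ and hence $1+P_{n-4}>0$ outright, so your appeal to Corollary \ref{c:Cl:NoNegInt} is only load-bearing in small cases where explicit values ($P_4=\game{1}{0}$, so $1+P_4=\game{2}{1}>0$) settle the matter anyway; the paper leans on that corollary for all $n$, and strictly speaking ``$P_k$ is not a negative number'' is weaker than the claim ``$1+P_k\not\leq 0$'' it is cited to support --- a gap your induction quietly closes. What the paper's route buys is brevity and a description of Right's only plausible tries; what yours buys is a complete verification of every Right option and the reusable conclusion $P_n>0$.
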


\begin{proof}
Let $n$ be a positive integer, and consider the path position $P_n$ in the Classic Variant. Notice 

\begin{align*}
P_2&=\game{}{},\\ 
P_3&=\game{0}{},\\
P_4&=\game{1}{0},\text{ and}\\ 
P_5&=\game{P_4,0}{P_3}. 
\end{align*}

Since $P_3=1$, we know if Left moves first and $n\geq 6$, she can move to $1+P_{n-4}$. However by Corollary \ref{c:Cl:NoNegInt}, we know there is no $k\in \Z^+$ such that $1+P_k \leq 0$. Thus, regardless of which player wins the $P_{n-4}$ position, Left will always have the last legal move on $P_3$ and win. We have not shown that no path position with value greater than 1 exists, and if one did it would surely be more optimal for Left to win by a larger margin. Yet, we have shown Left is able to win by a potentially not optimal strategy.

If Right moves first, we know he will want to move $P_n$ to a game sum without creating a $P_3$ position or path position long enough for Left to create a $P_3$ on her following turn. This implies Right will move to $P_2+P_{n-2}=P_{n-2}$ or $P_5+P_{n-5}$. 

Let us consider the case where Right moves to $P_{n-2}$ first. From above, we know Left wins playing first on $P_{m\geq 3}$. Hence, Left will win if Right moves $\rightmove{P_n}{P_{n-2}}$ for $n \geq 5$. 

Now consider the case where Right moves to $P_5+P_{n-5}$. We can assume $3 < n-5 < 6$. So if Right moved to $P_5+P_4$, Left will move $\leftmove{P_5+P_4}{P_5+P_3}$, and if Right moved to $P_5+P_5$, Left will move $\leftmove{P_5+P_5}{P_5}$. Right's move on $P_5$ must create $P_3$. Thus, Right has no winning strategy if he moves $\rightmove{P_n}{P_5+P_{n-5}}$.L

Therefore, for $n\geq 5$, Left is guaranteed to win $P_n$ regardless which player moves first.

\end{proof}

Solutions for more graph classes, including cycles and wheels, can be extrapolated similarly if they can be reduced by a single move into a graph class that has already been solved.  Cycles follow immediately from Theorem \ref{t:Paths} because the first move will move to $P_n$ or $P_{n-1}$.  

\begin{theorem}\label{t:Cl:Cycles}
For an integer $n\geq 3$ and $n\neq 5$, Left will win the Classic Variant played on the cycle position, $C_n$, regardless of which player moves first.
\end{theorem}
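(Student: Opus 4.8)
The plan is to reduce $C_n$ to path positions that are already solved and then invoke Theorem~\ref{t:Paths}. First I would determine the options of $C_n$. A cycle is vertex- and edge-transitive, so all of Left's moves are interchangeable and all of Right's moves are interchangeable. Deleting any vertex removes it together with its two incident edges and leaves the path $P_{n-1}$; the two neighbours of the deleted vertex drop from degree $2$ to degree $1$ rather than to $0$, so no isolated vertex is created and the move is legal for every $n\geq 3$. Deleting any edge leaves $P_n$, and again the two endpoints of the removed edge keep degree $1$, so that move is legal as well. Hence no move from $C_n$ is ever a (losing) isolating move, and up to equivalence $C_n=\game{P_{n-1}}{P_n}$, with Left's only move $\leftmove{C_n}{P_{n-1}}$ and Right's only move $\rightmove{C_n}{P_n}$.

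The generic case $n\geq 6$ then follows immediately. If Left moves first she plays $\leftmove{C_n}{P_{n-1}}$, and since $n-1\geq 5$, Theorem~\ref{t:Paths} says Left wins $P_{n-1}$ regardless of who moves next, in particular with Right to move. If Right moves first he has no choice but to play $\rightmove{C_n}{P_n}$, and since $n\geq 6\geq 5$, Theorem~\ref{t:Paths} says Left wins $P_n$ with Left to move. Thus Left wins $C_n$ no matter who starts, so $C_n>0$.

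For the base cases $n=3$ and $n=4$ I would fall back on the explicit small-path values established in the proof of Theorem~\ref{t:Paths}, namely $P_2=\zerogame=0$, $P_3=1$, and $P_4=\game{1}{0}=\tfrac{1}{2}$. Here $C_3=\game{P_2}{P_3}=\game{0}{1}$: Left moving first reaches $P_2=0$ with Right to move, so Right cannot move and loses, while Right moving first reaches $P_3=1>0$, a Left win. Likewise $C_4=\game{P_3}{P_4}=\game{1}{\tfrac{1}{2}}$, whose Left option $1$ and whose single Right option $\tfrac{1}{2}$ are both positive, so Left wins whether she moves first (to $1>0$) or moves second (Right being forced to $\tfrac{1}{2}>0$). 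In each case $C_n>0$, completing the proof.

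Since this is a routine reduction, the only delicate point is bookkeeping at the boundary: Theorem~\ref{t:Paths} certifies $P_m$ only for $m\geq 5$, whereas Left's opening move from $C_n$ lands in $P_{n-1}$. This is exactly why the clean citation covers $n\geq 6$ while the small values $P_2,P_3,P_4$ must be supplied by hand, and it is the reason $n=5$ is set apart, as Left's opening there reaches $P_4$, which lies outside the hypothesis of Theorem~\ref{t:Paths}. I would therefore organize the write-up as the $n\geq 6$ reduction followed by the explicit checks for $n=3$ and $n=4$.
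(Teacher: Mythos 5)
Your reduction $C_n=\game{P_{n-1}}{P_n}$ and the split into the generic case $n\geq 6$ plus hand checks for $n=3,4$ is exactly the paper's route (the paper merely remarks that cycles follow immediately from Theorem~\ref{t:Paths} because the first move lands in $P_n$ or $P_{n-1}$, leaving the small cases implicit), so your structure is fine and in fact more complete than what is printed. The one genuine error is the value you assign to $P_4$: $\game{1}{0}$ is not $\frac{1}{2}$ (that would be $\game{0}{1}$). Since the Left option $1$ exceeds the Right option $0$, $P_4=\game{1}{0}$ is a switch, confused with $0$: a first-player win, not a position Left wins moving second. Your sentence ``Right being forced to $\frac{1}{2}>0$'' therefore rests on a false premise. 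The conclusion for $C_4$ survives because in that line Left is the one to move on $P_4$, and she wins via $\leftmove{P_4}{P_3=1}$; that is, you only need $P_4\rhd 0$, not $P_4>0$. Replace the appeal to $P_4=\frac{1}{2}$ with that observation. The same correction also sharpens your closing remark about why $n=5$ is excluded: Left's forced opening $\leftmove{C_5}{P_4}$ hands Right the first move on a switch, which Right wins by moving to $0$.
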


For wheel graphs, Left will want to remove the center vertex first, independent of Right's move or who goes first.  This will result in $P_n$ after Left's first move.  Thus we can apply Theorem \ref{t:Paths} and we have the following: 

\begin{theorem}\label{t:Cl:Wheels}
For an integer $n \geq 3$ and $n\neq 5$, Left will win the wheel position $W_n$ regardless of which player moves first.
\end{theorem}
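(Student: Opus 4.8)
The plan is to reduce the wheel $W_n$ to an already-solved path position via Left's first move. The key structural observation is that a wheel $\wheel{n}$ consists of a cycle $\cycle{n}$ together with a central hub vertex adjacent to all $n$ rim vertices. If Left deletes the hub vertex (along with all its incident spokes), the remaining graph is exactly the rim cycle $\cycle{n}$, and after one further move this collapses to a path. The author's remark already signals this: Left's first move yields $\path{n}$-type structure, so the bulk of the work is organizing the first-move analysis so that Theorem~\ref{t:Cl:Cycles} (or Theorem~\ref{t:Paths}) can be invoked.

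First I would verify that deleting the hub is always a legal move for Left whenever $n \geq 3$: the hub has degree $n \geq 3 > 1$, and deleting it leaves the rim cycle $\cycle{n}$ in which every vertex has degree $2$, so no isolated vertex is created. Thus the move $\leftmove{W_n}{C_n}$ is valid. If Left moves first, she plays this move immediately, arriving at $\cycle{n}$ with Right to move, and by Theorem~\ref{t:Cl:Cycles} Left wins $C_n$ playing second for $n \geq 3$, $n \neq 5$. The harder direction is when Right moves first. Here Right deletes an edge of $W_n$, which is either a spoke (hub-to-rim) or a rim edge. I would argue that after any such Right move, Left can still delete the hub on her reply, reducing to a near-cycle path position of the form $\path{n}$ or a sum involving a short path, and then appeal to the already-established path and cycle results.

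The main obstacle will be the case analysis after Right's opening move, since deleting the hub when a spoke is already gone leaves the rim cycle with one rim vertex now of degree $2$ (still fine), but deleting a rim edge first and then the hub leaves $\path{n}$ rather than $\cycle{n}$, so Left lands in a genuine path position to which Theorem~\ref{t:Paths} applies for $n \geq 5$. I would need to check the small cases $n = 3, 4$ separately and confirm that the exclusion $n \neq 5$ propagates correctly, ensuring that the resulting path or cycle component is never one of the excluded losing values for Left. The delicate point is guaranteeing that Left's hub-deletion never itself creates an isolated vertex after Right has already lowered some rim vertex's degree; since each rim vertex starts with degree $3$ in $\wheel{n}$, removing at most one incident edge plus the spoke still leaves degree $\geq 1$, so no rim vertex is isolated by Left's move, which secures the legality of the strategy.

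Once the legality and reduction are established, the conclusion follows by quoting Theorem~\ref{t:Cl:Cycles} for the Left-second case and Theorem~\ref{t:Paths} for the Right-first case, so that in every admissible $n \geq 3$ with $n \neq 5$ Left wins regardless of who moves first.
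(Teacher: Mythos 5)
Your proposal is correct and follows exactly the paper's argument: Left's strategy is to delete the hub on her first move (whether moving first or replying to Right's opening edge deletion), reducing the position to $C_n$ or $P_n$, and the result then follows from Theorem~\ref{t:Cl:Cycles} and Theorem~\ref{t:Paths}. Your version is in fact somewhat more careful than the paper's one-line justification, since you distinguish the spoke-versus-rim-edge cases for Right's first move and verify the legality of the hub deletion.
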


For a complete graph, $K_n$, Left also has the advantage by removing a vertex incident to the edge removed by Right.  Thus Right can always be forced to play on a complete subgraph $K_{n-1}$.  Eventually Right will be forced to move $\rightmove{K_3}{P_3}$, and Left wins $P_3$. Therefore, we have the following result for complete graphs: 

\begin{theorem}\label{t:Cl:Completes}
For a positive integer $n\geq 3$, Left will win the complete graph position $K_n$.
\end{theorem}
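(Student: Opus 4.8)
The plan is to prove the statement by strong induction on $n$, giving Left an explicit response strategy. The governing idea, already suggested in the paragraph preceding the theorem, is that Left answers each of Right's edge deletions with a vertex deletion that restores a complete graph one vertex smaller, so that play funnels down to $K_3$ with Right on move. Since Left must win against both possible first players, I would carry the stronger hypothesis ``Left wins $K_m$ regardless of who moves first'' (equivalently, it suffices to track that Left can always hand Right the move on a strictly smaller complete graph).

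First I would record the degree facts that make every relevant move legal and drive the base case. Every vertex of $K_n$ has degree $n-1$, so for $n\geq 3$ deleting a vertex leaves minimum degree $n-2\geq 1$, and for $n\geq 4$ deleting an edge leaves minimum degree $n-2\geq 2$; hence none of these moves creates an isolated vertex. I would also note that $P_2$ has no legal move for Right, since its only edge deletion isolates both endpoints, and that $P_3$ has value $1>0$, so the player on move in $P_2$ loses and Left wins $P_3$ on her turn. For the base case $n=3$ I would then dispatch both first players directly: if Left moves first she plays $K_3$ to $P_2=K_2$, leaving Right forced to isolate a vertex and lose; if Right moves first his only move (up to symmetry) is $\rightmove{K_3}{P_3}$, after which Left wins $P_3=1$.

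For the inductive step $n\geq 4$, I would show both first-player cases reduce to ``$K_{n-1}$ with Right on move'' and then invoke the hypothesis. If Left moves first she deletes any vertex and, by symmetry, reaches $K_{n-1}$ with Right to move. If Right moves first he deletes an edge $\{a,b\}$, reaching $K_n-e$; Left then deletes the endpoint $a$. The crucial point is that the sole missing edge of $K_n-e$ is incident to $a$, so deleting $a$ removes that defect together with the vertex, and the remaining $n-1$ vertices induce a genuine complete graph $K_{n-1}$, again with Right on move. In both cases the degree bounds above confirm legality, and the induction hypothesis finishes the win.

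The hard part will be the bookkeeping rather than any computation: I must verify that each of Left's two responses truly hands Right the move on a strictly smaller \emph{complete} graph, and that no move along the way produces a premature isolated vertex. In particular the claim that deleting an endpoint of Right's edge yields exactly $K_{n-1}$ — and not $K_{n-1}$ with some further edge still missing — is the step that needs care, since it is what keeps the induction inside the complete-graph family. Once that reduction and the accompanying degree checks are in place, the recursion down to the $K_3$ base case is immediate.
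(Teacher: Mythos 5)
Your proposal is correct and follows essentially the same strategy the paper sketches: Left answers each of Right's edge deletions by deleting an endpoint of that edge, restoring a complete graph one vertex smaller, until Right is forced to move on $K_3$ and Left wins the resulting $P_3$. Your write-up simply makes the induction, the legality (degree) checks, and the observation that $K_n - e$ minus an endpoint of $e$ is exactly $K_{n-1}$ explicit, which the paper leaves implicit.
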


Other graph classes, including star graphs and complete bipartite graphs, can be handled similarly.  All of which show an advantage for Left. 

The analysis of the Classic Variant begins to show the imbalance between an edge deletion and a vertex deletion. Paths, cycles, wheels, stars, complete bipartite, and complete graphs are biased towards Left winning, and this bias largely resulted from Left's ability to delete leaf vertices.  It then becomes natural to ask, ``Who wins if Left cannot delete leaf vertices?''


\subsection{The Forbidden Leaf Variant} \label{ForbiddenGame}

 In the interest of trying to remove the bias towards Left, we will add a rule that denies her the ability to delete leaf vertices.  

\begin{definition}\label{d:FL:FLRules}
The Forbidden Leaf Variant adopts all of the base rules and adds to it the condition that Left may not delete leaf vertices.
\end{definition}

It should be immediately clear that $P_2=P_3=0$. However, notice Left can no longer move on $P_4$ but Right can. For path graphs, this creates an advantage for Right because any $\path{4}$ component acts as a ``free move'' for Right and there are no ``free moves'' for Left.  This implies there is no path position $P_n$, with a positive integer or dyadic rational value.  So we see our additional condition is too strong and rather than eliminating the bias, it has shifted it from Left to Right.  To more formalize this argument, consider the following proposition: 

\begin{prop}\label{p:FL:RightBias}
There is no graph that contains a move for Left and no move for Right in the Forbidden Leaf Variant.
\end{prop}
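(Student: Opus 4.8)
The plan is to unpack what a legal move means for each player under the Forbidden Leaf rule and then observe that a legal Left move directly exhibits a legal Right move, so the argument runs parallel to (but is even shorter than) the one for Proposition~\ref{p:Cl:LeftBias}. Under the base rules together with the Forbidden Leaf restriction, a legal move for Left consists of deleting a non-leaf vertex $v$ (so $\delta(v)\geq 2$) in such a way that no isolated vertex is created. Since deleting $v$ removes all its incident edges and thus lowers the degree of each neighbor by exactly one, requiring that no isolated vertex appear forces every neighbor of $v$ to have degree at least $2$. A legal move for Right consists of deleting an edge $\{a,b\}$ without creating an isolated vertex, which is exactly the condition $\delta(a),\delta(b)>1$ appearing in Proposition~\ref{p:Cl:LeftBias}.

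First I would assume $\graph{G}$ contains a legal move for Left and fix such a vertex $v$, so that $\delta(v)\geq 2$ and, by the observation above, every neighbor of $v$ has degree at least $2$. Because $\delta(v)\geq 2$, the vertex $v$ has at least one neighbor $u$, and $\delta(u)\geq 2$ as well. Then the edge $\{v,u\}$ has both endpoints of degree greater than $1$, so deleting it leaves neither $v$ nor $u$ isolated; this is a legal move for Right. Hence whenever Left has a move, Right has a move, which is precisely the contrapositive of the statement.

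I expect the only genuine step to be the careful translation of the two governing rules (the Forbidden Leaf restriction together with the base rule forbidding the creation of an isolated vertex) into the degree conditions $\delta(v)\geq 2$ and $\delta(u)\geq 2$ for each neighbor $u$ of $v$. Once these are in place the conclusion is immediate: a legal Left move automatically supplies an edge joining two vertices of degree at least $2$, so no case analysis is needed, in contrast to the companion argument for Proposition~\ref{p:Cl:LeftBias}, where one had to start from a Right move and search for a suitable vertex to produce a Left move.
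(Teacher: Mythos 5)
Your proposal is correct and follows essentially the same route as the paper's proof: assume Left has a legal move at a non-leaf vertex $v$, observe that the no-isolated-vertex rule forces every neighbor of $v$ to have degree at least $2$, and conclude that deleting the edge from $v$ to any neighbor is a legal move for Right. The only difference is that you spell out the translation of the rules into degree conditions more explicitly, which the paper leaves implicit.
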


\begin{proof}
Let $\graph{G}$ be a graph, and $G$ be the corresponding game position. Assume $\GL\neq\emptyset$. This implies the existence of a vertex, $v$, such that $\delta (v) >1$, and for any vertex, $v'$, that is adjacent to $v$, $\delta(v')>1$. However, deleting the edge $\{v,v'\}$ is a legal move for Right. Thus, if Left has a legal move in $G$, then Right must also have a legal move in $G$. Therefore, there is no graph that contains a move for Left and does not contain a move for Right.
\end{proof}

Thus the advantage has now shifted to Right's ability to play on $\path{4}$ whereas Left can not.  

\begin{corr}\label{c:FL:NoPosInt}
There is no path position $P_n$, with a positive integer or dyadic rational value.
\end{corr}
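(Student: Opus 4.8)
The plan is to mirror the proof of Corollary \ref{c:Cl:NoNegInt} with the roles of Left and Right interchanged and all signs negated, invoking Proposition \ref{p:FL:RightBias} in place of Proposition \ref{p:Cl:LeftBias}.

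First I would pin down which single path positions have no move for Right. Deleting an edge creates an isolated vertex exactly when one of its endpoints is a leaf, so a legal move for Right exists precisely when $P_n$ contains an edge both of whose endpoints have degree at least $2$; such an interior edge exists if and only if $n \geq 4$ (for $P_4 = v_1 v_2 v_3 v_4$, Right may delete $\{v_2, v_3\}$ to reach $P_2 + P_2$). Hence the only single path positions with $\GR = \emptyset$ are $P_2$ and $P_3$, and for these $\GL = \emptyset$ as well, so each equals $0$. This is the exact mirror of the Classic observation that $P_2$ is the only path with $\GL = \emptyset$.

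Next, no path position can equal a positive integer. In canonical form a positive integer is $k = \game{k-1}{}$, which has $\GL \neq \emptyset$ but $\GR = \emptyset$; by the previous step the only single path positions with $\GR = \emptyset$ are $P_2 = P_3 = 0 \neq k$. Equivalently, a value $k \geq 1$ would demand a position with a move for Left and none for Right, which Proposition \ref{p:FL:RightBias} forbids for any graph. The dyadic case then follows the same reduction used in Corollary \ref{c:Cl:NoNegInt}: writing $\frac{q}{2^j} = \game{\frac{q-1}{2^j}}{\frac{q+1}{2^j}}$ for odd $q$, any positive dyadic with $|q| < 2^j$ forces the sub-position $\frac{1}{2} = \game{0}{1}$ and hence a Left option valued $1$, which the integer case has excluded; while a dyadic with $|q| \geq 2^j$ splits into a positive integer part plus a fraction in $(0,1)$, both already ruled out.

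The hard part will be the step linking ``$P_n$ equals a positive number'' to ``some option must carry a forbidden value,'' since a path position need not be presented in canonical form and its options are sums of shorter path positions rather than single paths. The cleanest way to make this rigorous is to strengthen the claim to all game sums of path positions and argue by induction, using additivity of values together with Proposition \ref{p:FL:RightBias} to control the option sets, rather than reasoning value-by-value on a single component.
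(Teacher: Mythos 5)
Your proposal follows essentially the same route as the paper: invoke Proposition \ref{p:FL:RightBias} to rule out any position of value $k=\game{(k-1)}{}$ for $k\in\Z^+$, then reduce positive dyadic rationals to already-excluded values via $\frac{q}{2^j}=\game{\frac{q-1}{2^j}}{\frac{q+1}{2^j}}$. The canonical-form looseness you flag at the end (a position can equal a positive number without literally having empty Right option set) is present in the paper's own proof, and in Corollary \ref{c:Cl:NoNegInt}, so it is a fair observation about rigor rather than a departure in method.
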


\begin{proof}
We know from Proposition \ref{p:FL:RightBias} that if $\GR=\emptyset$ then $\GL=\emptyset$. Thus, it is impossible for $k\in\Z^+$ to exist such that $P_n=k=\game{(k-1)}{}$. Therefore, there does not exist any path position with an integer value greater than zero.

Further, for $j>0$ and an odd $q\in\Z$, the game value $\frac{q}{2^j}=\game{\frac{q-1}{2^j}}{\frac{q+1}{2^j}}$. So any positive dyadic rational for which $|q|<2^j$ implies the existence of a position $\frac{1}{2}=\game{0}{1}$, but as shown above there is no path position valued $1$. Notice in the case where $|q| \geq 2^j$, we can express $\frac{q}{2^j}$ as a mixed number requiring both a positive integer value and a dyadic rational between $0$ and $1$, both of which have already been shown to be impossible values for a path position. Therefore, there does not exist any path position with a positive dyadic rational value.

\end{proof}

We will now use this result to show Right wins for sufficiently large path positions.  

\begin{theorem}\label{t:FL:Paths}
For an integer $n\geq 8$ and not equal to $11$, Right is guaranteed to win $P_n$ in the Forbidden Leaf Variant regardless of which player moves first.
\end{theorem}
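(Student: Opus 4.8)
The plan is to mirror the proof of Theorem~\ref{t:Paths}, interchanging the roles of Left and Right: where the Classic Variant handed Left a free move of value $1$ on every $P_3$ component, the Forbidden Leaf Variant hands Right a free move of value $-1$ on every $P_4$ component, since Left cannot move on $P_4$ while Right may split it into $P_2+P_2$. Accordingly, I would first record the small path values by direct computation: $P_2=P_3=0$, $P_4=-1$, $P_5=\ast$, while $P_6$ and $P_7$ are each confused with $0$ (first-player wins). These last evaluations explain why the theorem must begin at $n\ge 8$: for $n\in\{5,6,7\}$ the position does not favour Right regardless of who moves first. I would also compute $P_8,P_9,P_{10}$ explicitly to seed the induction and, crucially, evaluate $P_{11}$ to confirm it is a first-player win, thereby justifying its exclusion.

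The Right-moving-first direction is the easy half and follows the free-move argument directly. When $n\ge 8$ Right deletes the edge that splits $P_n$ into $P_4+P_{n-4}=-1+P_{n-4}$ (legal since $n-4\ge 2$). By Corollary~\ref{c:FL:NoPosInt} no path position has a positive value, so $P_{n-4}$ is either $0$, negative, or an infinitesimal confused with $0$; in every such case $-1+P_{n-4}<0$, and Right wins. The content of the theorem therefore lies entirely in showing that Left, moving first, still loses.

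For the Left-moving-first direction I would argue by strong induction on $n$. A first move by Left on $P_n$ necessarily deletes an interior degree-two vertex not adjacent to a leaf, leaving $P_a+P_b$ with $a+b=n-1$ and $a,b\ge 2$. For each such split I would exhibit a Right response landing in a position already known to be a Right win: typically Right creates a fresh $P_4$ inside the larger component, reducing to $P_4+P_a+P_{b'}$ and invoking Corollary~\ref{c:FL:NoPosInt} together with the inductive hypothesis on the large component, while the genuinely small residues $P_5=\ast$, $P_6$, $P_7$ are handled by finitely many explicit sum evaluations. Combining the two directions gives $P_n<0$, a Right win independent of who starts.

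The main obstacle is this Left-first case, for two intertwined reasons. First, when Left splits $P_n$ so that a component is one of the fuzzy small positions ($\ast$, $P_6$, or $P_7$), that component alone does not favour Right, so Right must win through the interaction of the summands; verifying $\ast+G<0$, $P_6+G<0$, and the like for the relevant Right-winning $G$ demands careful, non-additive outcome analysis rather than a one-line value comparison. Second, the anomaly at $n=11$ must be prevented from rescuing Left in the induction: whenever a split of a larger $P_n$ produces a $P_{11}$ component (for instance $P_{13}\to P_2+P_{11}$), I cannot appeal to $P_{11}<0$, so I must supply an alternative Right response or show the companion component already forces the win. Managing these boundary interactions while proving that the excluded set is exactly $\{n<8\}\cup\{11\}$ is the crux of the argument.
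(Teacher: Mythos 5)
Your proposal is correct and follows essentially the same route as the paper: Right wins moving first by manufacturing a $P_4=-1$ component that Left cannot counter (Corollary~\ref{c:FL:NoPosInt}), the small positions $P_5=\ast$, $P_6$, $P_7$ explain the bound $n\geq 8$, and the exception $n=11$ arises from Left's move to $P_5+P_5=0$. If anything you are more explicit than the paper about the Left-moves-first case (the paper simply asserts Left's only viable tries are $P_{n-4}$ and $P_5+P_{n-6}$), though your passing claim that a fuzzy path position must be an \emph{infinitesimal} is not quite right --- e.g.\ $P_6=\game{0}{-1}$ is confused with $0$ but not infinitesimal --- even though the needed conclusion $-1+P_{n-4}<0$ still holds.
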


\begin{proof}
Let $n$ be a positive integer, and consider the path position $P_n$ in the Forbidden Leaf Variant. Notice
\begin{align*}
P_2&=\zerogame, \\
P_3&=\zerogame, \\
P_4&=\game{}{0} \text{, and}\\
P_5&=\ast .
\end{align*}

Consider $n\geq 6$. Since $P_4=-1$, we know Right can adopt a strategy of creating a $P_4$ component with each move and we know Left has no ability to counter this strategy. If Right moves first, then he will be able to create a $P_4$ component and win. 

Hence, we may restrict our examination to the case where Left moves first. Left will avoid creating any $P_4$ component or $P_k$ component where $k$ is sufficiently large to allow Right to create a $P_4$ component on his subsequent move. Left has two options: move $\leftmove{P_n}{P_{n-4}}$ or move $\leftmove{P_n}{P_5+P_{n-6}}$. For the first strategy, if $n\geq 8$, Right will be able to win. For the second strategy, Left can only win if $n=11$ as $P_5 +P_5=0$. Notice Left can only attempt the second strategy if $n\geq 8$. 
\end{proof}

We can again extrapolate our result on paths to solve cycles since any edge or vertex deletion in a cycle will result in a path.  Wheel graphs do not follow easily from paths because of the inability of Right to move a wheel to a cycle and Left's desire to avoid cycles.   

\begin{theorem}\label{t:FL:Cycles}
For an integer $n\geq 8$ such that $n\neq 11$, Right is guaranteed to win $C_n$ in the Forbidden Leaf Variant regardless of which player moves first.
\end{theorem}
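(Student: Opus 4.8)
\textbf{The plan is to} reduce the cycle position $C_n$ to a path position in a single move and then invoke Theorem \ref{t:FL:Paths}. The central observation is that \emph{any} legal deletion on a cycle yields a path: if Right deletes an edge from $C_n$, the result is $P_n$; if Left deletes a vertex (and its two incident edges), the result is $P_{n-1}$. Since every cycle vertex has degree $2$, Left always has a legal move on $C_n$ for $n \geq 3$ (no vertex is a leaf), and likewise every edge is deletable by Right, so both players are constrained to move immediately into the path world. This is the feature that makes cycles tractable from the already-solved path case.

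\textbf{First I would} handle the case where Right moves first. For $n \geq 8$ with $n \neq 11$, Right moves $\rightmove{C_n}{P_n}$, and by Theorem \ref{t:FL:Paths} Right wins $P_n$ playing second, so this is a winning move. \textbf{Next} I would treat Left moving first: her only option is $\leftmove{C_n}{P_{n-1}}$, after which Right plays first on $P_{n-1}$. The obstacle here is that Theorem \ref{t:FL:Paths} only guarantees a Right win on $P_m$ for $m \geq 8$ and $m \neq 11$, so I must verify that $P_{n-1}$ falls in this range. For $n \geq 8$ this gives $m = n-1 \geq 7$, which is \emph{not} automatically covered, so the boundary indices $m \in \{7, 10, 11\}$ require separate attention: when $n = 8$ we get $P_7$, when $n = 11$ (excluded) we would get $P_{10}$, and when $n = 12$ we get $P_{11}$.

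\textbf{The hard part will be} these boundary cycle lengths $n \in \{8, 12\}$ (and confirming why $n = 11$ is excluded), where the path obtained by Left's move lands on an index outside the clean range of Theorem \ref{t:FL:Paths}. For these I would compute the outcome of the relevant small path positions directly: using the values $P_4 = \game{}{0}$ (a Right win worth $-1$), $P_5 = \ast$, and Corollary \ref{c:FL:NoPosInt} (no path position has positive value), I would check that Right, moving first on $P_7$ and on $P_{11}$, can still create a $P_4$ component or otherwise force the win—for instance $\rightmove{P_7}{P_4 + P_2} = P_4$ immediately hands Right a free move, and the $P_{11}$ analysis mirrors the $n=11$ discussion in the proof of Theorem \ref{t:FL:Paths}. \textbf{Finally} I would observe that since Right wins both as first player (via $C_n \to P_n$) and as second player (responding to $C_n \to P_{n-1}$), Right wins $C_n$ regardless of who moves first, completing the argument.
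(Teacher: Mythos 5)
Your proposal takes essentially the same route as the paper: both reduce $C_n$ in a single forced move to $P_n$ (Right first) or $P_{n-1}$ (Left first) and then appeal to the path analysis, with the paper covering your boundary cases $P_7$ and $P_{11}$ by citing the fact, established inside the proof of Theorem \ref{t:FL:Paths}, that Right wins moving \emph{first} on $P_k$ for every $k\geq 4$. One trivial slip: an edge deletion on $P_7$ yields $P_4+P_3$ (still equal to $P_4=-1$ since $P_3=0$), not $P_4+P_2$.
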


\begin{proof}
Let $n$ be a positive integer, and $C_n$ be the cycle position on $n$ vertices in the Forbidden Leaf Variant. If Right moves first, he must move $\rightmove{C_n}{P_n}$. Left will lose moving first on $P_k$ if $k\geq 8$ and $k\neq 11$. If Left moves first, she must move $\leftmove{C_n}{P_{n-1}}$. Right will win moving first on $P_k$ if $k \geq 4$. Therefore, Right will win $C_n$ if $n\geq 8$ and $n\neq 11$ regardless of which player moves first.

\end{proof}

We conclude the Forbidden Leaf Variant by considering complete graphs.  As with the Classic Variant,  regardless of who goes first, Right can be forced to always move on a complete subgraph.  This can continue until Right must move on $K_4$ to which Left will counter and move the graph position to $P_3$.  Thus Left will win on $K_n$ if $n \geq 5$ regardless of which player moves first.  

\begin{theorem}\label{t:FL:Completes}
For an integer $n\geq 5$, Left is guaranteed to win $K_n$ in the Forbidden Leaf Variant regardless of which player moves first.
\end{theorem}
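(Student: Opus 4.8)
The plan is to exhibit an explicit winning strategy for Left and to make its turn-parity bookkeeping precise. The heart of the argument is the following claim, which I would prove by induction on $m$: for every integer $m\geq 4$, the position consisting of $K_m$ with Right to move is a win for Left. Granting this, the theorem follows quickly, and I treat that reduction last.

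First I would set up Left's response strategy for the inductive step. On any complete position $K_m$ Right has, up to symmetry, only one kind of move, namely deleting an edge $\{a,b\}$, so I may assume Right moves $K_m \to K_m-\{a,b\}$. For $m\geq 5$ Left responds by deleting the endpoint $a$. I would verify this is both legal and safe: in $K_m-\{a,b\}$ the vertex $a$ has degree $m-2\geq 3$, so it is not a leaf, and after its removal the surviving $m-1$ vertices still form a complete graph in which every vertex has degree $m-2\geq 3>0$, so no isolated vertex is created. Hence Left returns the position to $K_{m-1}$ with Right again to move, and the inductive hypothesis (valid since $m-1\geq 4$) closes this case.

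The base case $m=4$ is where the real care is required, and I expect it to be the main obstacle: the naive continuation $K_4\to K_3$ would be a blunder, since $K_3=\ast$ is a first-player win and Right would be the one to move on it. Instead, after Right plays $K_4\to K_4-\{a,b\}$, Left must delete a vertex \emph{not} incident to $\{a,b\}$ (one of the two degree-$3$ vertices). I would check that this deletion is legal and safe and leaves exactly the path $P_3$, now with Right to move. Since $P_3=0$ and $0$ is a win for the player not to move, Right is on the losing end, establishing the base case. The delicate point throughout is thus the whose-turn bookkeeping together with recognizing that the terminal target must be $P_3=0$ rather than $K_3=\ast$.

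Finally I would derive the theorem from the claim. If Right moves first on $K_n$ with $n\geq 5$, this is precisely the position of the claim with $m=n\geq 4$, so Left wins. If Left moves first, Left opens by deleting any vertex, moving $K_n\to K_{n-1}$; this is legal and safe because every vertex of $K_n$ has degree $n-1\geq 4$ and $K_{n-1}$ has no isolated vertex. Left thereby hands Right the position $K_{n-1}$ with $n-1\geq 4$, which is again covered by the claim. In both cases Left wins. As a consistency check, the hypothesis $n\geq 5$ is sharp: moving first on $K_4$ Left can only reach $K_3=\ast$ and then loses, which is exactly why $K_4$ is excluded.
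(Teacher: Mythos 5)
Your proposal is correct and follows essentially the same strategy as the paper: Left answers each edge deletion by removing one of its endpoints to return to a smaller complete graph, and the decisive endgame is $K_4$ with Right to move, where Left plays to $P_3=0$ rather than $K_3=\ast$. You merely make the paper's ``eventually Right must move first on $K_4$'' explicit as an induction with turn-parity bookkeeping.
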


\begin{proof}
Let $n$ be a positive integer. Notice $K_1=K_2=0$ and $K_3=\ast$. If $n=4$ and Left moves first, she must move $\leftmove{K_4}{K_3}$ and Right wins moving first on $\ast$. If $n=4$ and Right moves first, then any edge he deletes results in the same graph position due to symmetry. The corresponding graph has two vertices of degree 2 and two vertices of degree 3. Left can win this position by deleting a vertex of degree 3 which moves the graph position to $P_3$. If $n\geq 5$, and Left moves first, any vertex she deletes moves $\leftmove{K_n}{K_{n-1}}$. Hence, it will suffice to show that Left wins moving second. Again on account of symmetry, any single edge deleted from $\complete{n}$ results in one distinct graph after excluding isomorphisms, so Right only has one possible first move. If Left counters by deleting one of the two vertices incident to the edge Right deleted, she moves to $K_{n-1}$. Eventually, Right will be forced to move first on $K_4$. From above, we know Left can win if this happens. Therefore, Left will win $K_n$ if $n\geq 5$ regardless of which player moves first.

\end{proof}

Left still has the ability to win complete graphs because complete graphs are connected enough to overcome the bias towards Right. In this way, they are different than the other graph classes because Left can prevent the graph from breaking down into paths until the paths are small enough to be unplayable. 

We saw early in the analysis of the Forbidden Leaf Variant that our additional rule was too strong. The bias given to Left in the Classic Variant was given to Right in the Forbidden Leaf Variant. Although we did see Left still wins complete graphs, it is clear that for paths, the bias shifted to Right because of his ability to play on $P_4$.


\subsection{The Mutual Failures Variant}\label{MutualGame}

In both of the prior variants, we found that the game was inherently biased towards one player or the other. Examining the Classic Variant and the Forbidden Leaf Variant for similarities revealed an interesting relationship in the failure states of the players. In the Classic Variant, the only path graph that did not contain a move for Left was $P_2$, while Right could not move on $P_2$, and, $P_3$. In the Forbidden Leaf Variant, Right could not move on $P_2$, and $P_3$, while Left could not move on $P_2$, $P_3$, and $P_4$. In both cases one players set of failure states was a proper subset of the other player's, thus we move to examining the \textit{Mutual Failures Variant.}

\begin{definition}\label{d:MF:MFRules}
The Mutual Failures Variant adopts all of the base rules, and adds to it the condition that in any graph position,  $\GR=\emptyset$ if and only if  $\GL=\emptyset$.
\end{definition}

This added condition dramatically changes the nature of the Mixed Deletion game and requires a different technique for analysis.  The condition, $\GR=\emptyset$ if and only if  $\GL=\emptyset$, implies that every position in the Mutual Failures Variant is all-small and infinitesimal. 

We will need to utilize atomic weights to solve this variant. Although we can calculate the atomic weight for small games based on the game tree, it is impractical to do so for large games.  For the benefit of the reader, we will show the calculation of the atomic weight $P_5$ and $P_6$ using two different methods. 

\begin{example}\label{ex:MF:BaseCases}
Consider $P_5$ in the Mutual Failures Variant. Notice $P_4=\game{0}{0}$, so $P_5=\game{P_4,0}{0}={\uparrow}{\ast}$.  By Theorem \ref{t:BG:UsefulEquiv}, to show $\farstareq{P_5}{\uparrow}$, we need to show that ${\downarrow}{\bigstar}<P_5-\uparrow<{\uparrow}{\bigstar}$. Thus, it will suffice to show ${\uparrow}{\ast}+\bigstar>0$ and ${\downarrow}{\ast}+\bigstar<0$. 

First, we can fix $\bigstar=\ast 2$ as any larger nim heap will be reduced to $\ast 2$ by optimal players. If Left moves first on ${\uparrow}{\ast}+\ast 2$, she will move $\leftmove{{\uparrow}{\ast}+\ast 2}{\uparrow}$ which she wins. If Right moves first on ${\uparrow}{\ast}+\ast 2$, he can move to $\uparrow$, $\ast 2$, or $\uparrow+\ast 2$. Left obviously wins the first two, and can win the third moving $\leftmove{\uparrow+\ast 2}{\uparrow}$. Thus, ${\uparrow}{\ast}+\bigstar>0$. Since ${\downarrow}{\ast}+\bigstar=-({\uparrow}{\ast}+\bigstar)$, we also know ${\downarrow}{\ast}+\bigstar<0$. Therefore, AW$(P_5)=1$.

Let us now consider $P_6 = \game{P_5,P_2+P_3}{P_2+P_4,P_3+P_3}.$ Let 
\begin{align*}
g=& \{\text{AW}(P_5)-2,\text{AW}(P_2+P_3)-2 \\
&|\text{AW}(P_2+P_4)+2,\text{AW}(P_3+P_3)+2 \}. 
\end{align*}
After simplifying, $g=\game{-1}{2}=0$ which is an integer and thus invokes the exception of Theorem \ref{t:BG:AtomicCalc}. Then $x=0$ is the least integer such that $-1\lhd x$ and $y=1$ is the greatest integer such that $2\rhd y$. We must now compare $P_6$ to $\bigstar$.

Consider $P_6+\bigstar$. Again we can fix $\bigstar=\ast 2$. If Right moves first, he can move to $\ast 2$ or $\ast +\ast 2$. Left will win both. Since Left wins moving second, we know $P_6+\bigstar\geq 0$, so $P_6>\bigstar$. Therefore, AW$(P_6)=y=1$. \\

To compute some additional base case atomic weights, we used CGSuite, a program used to analyze combinatorial games: 

\begin{table}[H]\label{table:BaseCases}
\centering
\begin{tabular}{|c|c|}
\hline
Path Position & Atomic Weight \\
\hline 
$P_2$  & 0 \\
$P_3$  & 0 \\
$P_4$  & 0 \\
$P_5$  & 1 \\
$P_6$  & 1 \\
$P_7$  & 1 \\
$P_8$  & 1 \\
$P_9$  & 2 \\
$P_{10}$ & 2 \\
$P_{11}$ & 2 \\
$P_{12}$ & 2 \\
\hline
\end{tabular}
\caption {Atomic Weight for $P_2$ through $P_{12}$. }
\end{table}
\end{example}

As path positions move to sums of smaller path positions, and the smallest paths have integer valued atomic weights the exception will be invoked to find the atomic weight of every path position. For this reason, we will need to know how a path position compares to far star. Additionally, as we show every path position on 5 or more vertices is greater than far star, we can use Theorem \ref{t:BG:AtomicStar} to show a lower bound of the atomic weights of these positions.

\begin{theorem}\label{t:MF:FarStarCompare}
\doublespacing
For an integer $n\geq 5$, the following hold:
\begin{itemize}
\item $P_n>\bigstar$, and
\item AW$(P_n)\geq 1$.
\end{itemize}
\end{theorem}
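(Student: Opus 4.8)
The plan is to observe first that the two displayed conclusions are really one statement. Since $\bigstar$ is a nim-heap it is its own negative, $-\bigstar=\bigstar$, so $P_n>\bigstar$ is literally $P_n+\bigstar>0$, and by Theorem~\ref{t:BG:AtomicStar} the inequality $P_n+\bigstar>0$ holds exactly when $\text{AW}(P_n)\geq 1$. Hence the second bullet follows from the first, and it suffices to prove $P_n+\bigstar>0$ for every $n\geq 5$. I would do this by strong induction on $n$, taking $n=5,6,7,8$ (together with the remaining small values recorded in the table of Example~\ref{ex:MF:BaseCases}) as base cases, where $\text{AW}(P_k)=1$ already yields $P_k+\bigstar>0$. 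Throughout the induction I get for free that every shorter path has nonnegative atomic weight: $\text{AW}(P_k)=0$ for $k\in\{2,3,4\}$ and $\text{AW}(P_k)\geq 1$ for $5\leq k<n$ by the inductive hypothesis, so by additivity (Theorem~\ref{t:BG:AtomicAdd}) every sum $P_i+P_j$ arising below has atomic weight at least $0$.

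To show $P_n+\bigstar>0$ I must verify that Left wins moving first and moving second. Moving first is immediate: Left deletes a leaf, $\leftmove{P_n}{P_{n-1}}$, reaching $P_{n-1}+\bigstar$, which is $>0$ by the inductive hypothesis; since Right must then move in a strictly positive position, Left wins. For the second-player case I analyze Right's opening move on $P_n+\bigstar$. If Right deletes an edge of the path he reaches $Q+\bigstar$ with $Q=P_i+P_{n-i}$ and $i,n-i\geq 2$, so by additivity $\text{AW}(Q)\geq 0$. If $\text{AW}(Q)\geq 1$ then $Q+\bigstar>0$ by Theorem~\ref{t:BG:AtomicStar} and Left wins outright; if $\text{AW}(Q)=0$ then (by Theorem~\ref{t:BG:AtomicStar} and its negative) $Q\|\bigstar$, hence $Q+\bigstar\|0$, and Left, now the player to move, wins as the first player. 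Thus every edge deletion by Right loses.

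The remaining and genuinely delicate case is Right's move inside the remote-star summand, $\bigstar=\ast N\to\ast m'$. I would handle it through the remoteness of $\bigstar$: fixing $N$ larger than the star-reach of every position in play, a reduction to a still-remote heap leaves Left to move in ``$P_n$ plus remote star,'' which she wins by the same leaf move $\leftmove{P_n}{P_{n-1}}$ into $P_{n-1}+\bigstar>0$; a reduction to a small heap $\ast m'$ must instead be answered by a path move of Left into a summand of atomic weight at least $2$ (hence strictly positive after adding $\ast m'$), for instance into $P_{n-1}$ once $n$ is large enough that $\text{AW}(P_{n-1})\geq 2$.

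I expect this heap interaction to be the main obstacle, mirroring the concrete $P_5$ and $P_6$ computations of Example~\ref{ex:MF:BaseCases} where the heap was pinned at $\ast 2$; making the argument uniform in $n$ appears to require either carrying the sharper bound $\text{AW}(P_k)\geq 2$ for $k\geq 9$ alongside the induction, or a separate lemma bounding how large a nim-heap an optimal player would ever move to. Once $P_n+\bigstar>0$ is established for all $n\geq 5$, both $P_n>\bigstar$ and $\text{AW}(P_n)\geq 1$ follow immediately as noted at the outset.
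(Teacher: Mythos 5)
Your skeleton matches the paper's: strong induction on $n$, Left's first-player reply $\leftmove{P_n}{P_{n-1}}$, and a reduction of Right's edge-deletion moves to atomic-weight additivity. Your observation that the two bullets are equivalent via Theorem~\ref{t:BG:AtomicStar} is correct, and your handling of Right's edge deletions (atomic weight $\geq 1$ gives a win outright; atomic weight $0$ gives a position confused with $0$ with Left to move) is if anything cleaner than the paper's case enumeration of $P_{n-1}$, $P_{n-2}$, $P_{n-3}+P_4$, and $P_a+P_b$.

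The genuine gap is exactly where you flag it: Right's move inside the nim-heap, leaving Left to move on $P_n$ or $P_n+\ast$. Your proposed reply $\leftmove{P_n}{P_{n-1}}$ with $\text{AW}(P_{n-1})\geq 2$ is not available, since the inductive hypothesis only supplies $\text{AW}(P_k)\geq 1$ for $5\leq k< n$; the sharper bound $\text{AW}(P_k)\geq 2$ for $k\geq 9$ comes from Theorem~\ref{t:MF:AWofPn}, whose proof depends on the present theorem, so ``carrying it alongside the induction'' is circular absent some new argument. The missing idea is that Left should answer by \emph{splitting} the path: she moves $\leftmove{P_n}{P_{n-6}+P_5}$ (in the paper's indexing, $\leftmove{P_{n+1}}{P_{n-5}+P_5}$). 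Each summand has atomic weight at least $1$ by the inductive hypothesis together with Theorem~\ref{t:BG:AtomicStar}, so by additivity (Theorem~\ref{t:BG:AtomicAdd}) the resulting position has atomic weight at least $2$, and the Two Ahead Rule (Theorem~\ref{t:BG:TwoAhead}) makes it a Left win even after adding the leftover $\ast$. This single move closes the heap case using only what the induction provides. With it (and with the paper's convention of pinning $\bigstar$ at $\ast 2$, so the only heap moves to consider are to $0$ and to $\ast$, which disposes of your ``still-remote'' subcase), your argument goes through.
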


\begin{proof}
For an integer $n\geq 5$, let $P_n$ be the path position on $n$ vertices in the Mutual Failures Variant. We will prove by strong induction that $P_n>\bigstar$ and AW$(P_n)\geq 1$. Notice $P_n>\bigstar$ holds if Left wins $P_n+\bigstar$ regardless which player moves first, so that $P_n+\bigstar>0$.

Base Cases:  After we have shown a base case to be a Left win or a first player win, we will assume that Right will not move to that position. We will also assume Right will not move to $\bigstar$ as Left can move a single nim heap directly to $0$. Thus, we will omit these base case positions from Right's move set. Further, for $k\geq 6$, Left can move $\leftmove{P_{k+1}+\bigstar}{P_k+\bigstar}$, where $P_k+\bigstar$ is an earlier base case. For this reason we will not consider Left's move set for $P_n$ when $n\geq 6$.

Consider $P_4+\bigstar$. Both players will move $P_4+\bigstar$ to $\ast+\ast=0$. So $P_4+\bigstar$ is a first player win. 

Consider $P_5+\bigstar$. If Left moves first on $P_5+\bigstar$, she will move to $P_5+\ast$. Since $P_5+\ast=\uparrow$, she will win moving first. If Right moves first on $P_5+\bigstar$, his move set is $\GR=\{P_5,P_5+\ast\}$. Left can move the first position directly to $0$, and she moves $\leftmove{P_5+\ast}{P_4+\ast=0}$, so Left wins if Right moves first, and $P_5+\bigstar>0$.

Consider $P_6+\bigstar$. Right's move set is $\GR=\{P_6,\ast+\bigstar,P_6+\ast\}$. Left can move the first two positions directly to $0$, and the last position to $P_5+\ast=\uparrow$, so Left wins if Right moves first, and $P_6+\bigstar>0$.

Consider $P_7+\bigstar$. Right's move set is $\GR=\{P_7,P_7+\ast\}$. Left wins $P_7=\uparrow$, and she will move $\leftmove{P_7+\ast}{\ast+\ast=0}$, so Left wins if Right moves first, and $P_7+\bigstar>0$.

Consider $P_8+\bigstar$. Right's move set is $\GR=\{P_8,P_8+\ast,P_4+P_4+\bigstar\}$. Notice $P_4+P_4+\bigstar=\bigstar$, so Right will not make this move. Left moves $\leftmove{P_8}{P_7=\uparrow}$, and she will move $\leftmove{P_8+\ast}{P_5+\ast=\uparrow}$, so Left wins if Right moves first, and $P_8+\bigstar>0$.

Consider $P_9+\bigstar$. Right's move set is $\GR=\{P_9,P_9+\ast,P_5+P_4+\bigstar\}$. Left moves $\leftmove{P_9}{P_4+P_4=0}$, and she will move $\leftmove{P_9+\ast}{P_5+\ast=\uparrow}$. Lastly, she will move $P_5+P_4+\bigstar$ to $P_5+P_4=\uparrow$, so Left wins if Right moves first, and $P_9+\bigstar>0$.

Consider $P_{10}+\bigstar$. Right's move set is $\GR=\{P_{10},P_{10}+\ast,P_5+P_5+\bigstar,P_4+P_6+\bigstar\}$. Left will move $\leftmove{P_{10}}{P_4+P_5=\uparrow}$, and she will move $\leftmove{P_{10}+\ast}{P_{10}}$. Right can only move $P_{10}$ to $P_8$, $P_7$, $P_5+P_5=\Uparrow$ and $P_6+P_4$. We have shown already how Left wins the first three, and Left will win $P_6+P_4$ by moving to $P_5+P_4=\uparrow$. As for the other first moves Right can make, Left is able to move the smallest path component to $0$ leaving a base case shown to be a Left win. Thus, Left wins if Right moves first, and $P_{10}+\bigstar>0$.

We will now proceed with strong induction on $n>10$. Assume for all integers, $k$, such that $5\leq k \leq n$, that $P_k+\bigstar>0$.

Consider $P_{n+1}+\bigstar$. If Left moves first, she will move $\leftmove{P_{n+1}+\bigstar}{P_n+\bigstar}$. By the induction hypothesis, $P_n+\bigstar$ is a Left win regardless which player moves first.

If Right moves first, his move set is $\GR=\{P_{n+1},P_{n+1}+\ast,P_{n-1}+\bigstar,P_{n-2}+\bigstar,P_{n-3}+P_4+\bigstar,P_a+P_b+\bigstar\}$ for some integers $a,b\geq 5$ such that $a+b=n+1$. Left will move $\leftmove{P_{n+1}}{P_{n-5}+P_5}$, and because $n-5\geq 5$, we can apply Theorem \ref{t:BG:AtomicStar} to see AW$(P_{n-5}+P_5)\geq 2$. Then by Theorem \ref{t:BG:TwoAhead}, Left will win. Similarly, Left will move $\leftmove{P_{n+1}+\ast}{P_{n-5}+P_5+\ast}$, and AW$(P_{n-5}+P_5+\ast)\geq 2$. Application of the induction hypothesis shows that $P_{n-1}+\bigstar$ and $P_{n-2}+\bigstar$ are both Left wins. Left will move $\leftmove{P_{n-3}+P_4+\bigstar}{P_{n-3}+\bigstar}$ which is also a Left win by the induction hypothesis. Finally, because $a,b\geq 5$, $P_a+\bigstar$ and $P_b+\bigstar$ are both Left wins, so by Theorem \ref{t:BG:AtomicStar} AW$(P_a+P_b+\bigstar)\geq 2$. Thus, Left will win $P_{n+1}+\bigstar$ regardless which player moves first.

Therefore, $P_n>\bigstar$. Further, by Theorem \ref{t:BG:AtomicStar}, AW$(P_n)\geq 1$.

\end{proof}

Knowing that $P_n>\bigstar$, we can approach finding the atomic weight of $P_n$ inductively. We will use the fact that atomic weights are additive to determine the maximum atomic weight to which Left can move and the minimum atomic weight to which Right can move. The game position composed of Left's maximum minus two and Right's minimum plus two will be an integer requiring a comparison of $P_n$ to $\bigstar$. However, Theorem \ref{t:MF:FarStarCompare} makes this a trivial exercise. We then know AW$(P_n)$ is the greatest integer such that AW$(P_n)$ is less than two more than Right's minimum.

\begin{theorem}\label{t:MF:AWofPn}
\doublespacing
For an integer $n \geq 5$ and path position, $P_n$, the atomic weight of $P_n$ is $\left\lceil\frac{n}{4}\right\rceil-1$.
\end{theorem}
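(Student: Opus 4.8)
The plan is to prove the formula $\text{AW}(P_n) = \lceil n/4 \rceil - 1$ by strong induction on $n$, using the Atomic Weight Calculus (Theorem \ref{t:BG:AtomicCalc}) together with the far star comparison already established in Theorem \ref{t:MF:FarStarCompare}. The base cases $n = 5$ through $n = 12$ are recorded in the CGSuite table of Example \ref{ex:MF:BaseCases}, and one checks directly that they match: $\lceil 5/4 \rceil - 1 = 1$, $\lceil 9/4 \rceil - 1 = 2$, and so on. These give enough starting values to run the induction, since a path position $P_n$ moves only to sums of strictly smaller path positions.

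First I would enumerate the move sets of $P_n$ for $n \geq 13$. A vertex deletion by Left (of a non-endpoint vertex, since leaves are effectively handled by the mutual-failure rule) splits the path into $P_a + P_b$ with $a + b = n - 1$, and an edge deletion by Right splits it into $P_a + P_b$ with $a + b = n$; one must track the degenerate splits producing small components $P_2, P_3, P_4$ (all of atomic weight $0$). Using additivity of atomic weights (Theorem \ref{t:BG:AtomicAdd}) and the induction hypothesis, I would compute $\max_i \text{AW}(L_i)$ over Left's options and $\min_j \text{AW}(R_j)$ over Right's options. The key arithmetic observation is that Left's best split maximizes $\lceil a/4 \rceil + \lceil b/4 \rceil - 2$ subject to $a + b = n - 1$, while Right's best minimizes the analogous sum subject to $a + b = n$; these optimizations, together with the $-2$ and $+2$ shifts in the calculus, should pin down the value.

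Next, I would apply the Atomic Weight Calculus. Since the smallest path positions carry integer atomic weights, the expression $\{\max_i \text{AW}(L_i) - 2 \mid \min_j \text{AW}(R_j) + 2\}$ will simplify to an integer, invoking the exception clause. This is exactly where Theorem \ref{t:MF:FarStarCompare} does the work: because $P_n > \bigstar$ for all $n \geq 5$, the calculus tells us $\text{AW}(P_n) = y$, the greatest integer $y$ with $\min_j \text{AW}(R_j) + 2 \rhd y$. So the entire computation reduces to determining Right's minimal reachable atomic weight and adding two.

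The main obstacle will be the careful casework on residues of $n$ modulo $4$, since the ceiling function behaves differently depending on $n \bmod 4$, and the optimal splits for Left and Right shift accordingly. In particular, I expect the subtle point to be verifying that Right genuinely cannot move to an atomic weight lower than the one forced by the formula, i.e.\ that no clever split into small components lets Right undercut the bound; this requires confirming that breaking off a $P_2$, $P_3$, or $P_4$ (atomic weight $0$) piece is never more favorable to Right than a balanced split. Once the $\min_j \text{AW}(R_j)$ value is shown to equal $\lceil n/4 \rceil - 3$ (so that $y = \lceil n/4 \rceil - 1$), the result follows immediately from the calculus and the far star comparison.
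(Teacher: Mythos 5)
Your plan follows the paper's proof essentially step for step: strong induction from the base cases in Example \ref{ex:MF:BaseCases}, additivity of atomic weights to evaluate the split positions, the Atomic Weight Calculus with the integer exception invoked, and Theorem \ref{t:MF:FarStarCompare} to conclude $\text{AW}(P_n)=y$. The one concrete problem is the final quantitative claim, which is off by one and, as stated, internally inconsistent. Right's minimal reachable atomic weight from $P_n$ is $\left\lceil\frac{n}{4}\right\rceil-2$, attained by splitting off a $P_4$ to reach $P_{n-4}+P_4$ (since $\left\lceil\frac{a}{4}\right\rceil+\left\lceil\frac{b}{4}\right\rceil\geq\left\lceil\frac{a+b}{4}\right\rceil$, no split does better for Right); the value $\left\lceil\frac{n}{4}\right\rceil-3$ is not achievable. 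With $\min_j\text{AW}(R_j)+2=\left\lceil\frac{n}{4}\right\rceil$, the greatest integer $y$ with $\left\lceil\frac{n}{4}\right\rceil\rhd y$ is $\left\lceil\frac{n}{4}\right\rceil-1$, which is what you want. Note that if the minimum really were $\left\lceil\frac{n}{4}\right\rceil-3$ as you assert, the calculus would yield $y=\left\lceil\frac{n}{4}\right\rceil-2$, contradicting the theorem, so your parenthetical implication does not follow. Fix that arithmetic and your outline matches the paper's argument.
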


\begin{proof}
For an integer $n\geq 5$, let $P_n$ be the path position on $n$ vertices in the Mutual Failures Variant. We will prove by strong induction that AW$(P_n)=\left\lceil\frac{n}{4}\right\rceil-1$. 

Base Cases: See Example \ref{ex:MF:BaseCases}.

We will now proceed with strong induction on $n>10$. Assume for all integers, $k$, such that $5\leq k \leq n$, that AW$(P_k)=\left\lceil\frac{n}{4}\right\rceil-1$.

Consider AW$(P_{n+1})$. We will apply Theorem \ref{t:BG:AtomicCalc} to find  AW$(P_{n+1})$. We know Left is going to want to maximize the atomic weight, so she will not move to $P_{n-2}$, $P_{n-3}$, or $P_{n-4}+P_4$ as these moves potentially decrease the atomic weight of the larger component while creating a smaller component with atomic weight $0$. So then, Left will move to $P_n$ or $P_a+P_{n-a}$ where $a$ is an integer and $5\leq a \leq n-5$. By the induction hypothesis, AW$(P_n)=\left\lceil\frac{n}{4}\right\rceil-1$. For her other first move, 
\begin{align*}
\text{AW}(P_a+P_{n-a})&=\text{AW}(P_a)+\text{AW}(P_{n-a}) \\
&=\left\lceil\frac{a}{4}\right\rceil-1+\left\lceil\frac{n-a}{4}\right\rceil-1 \\
&\geq\left\lceil\frac{n}{4}\right\rceil-2.
\end{align*}
However, since $\left\lceil{x}\right\rceil + \left\lceil{y}\right\rceil \leq \left\lceil{x+y}\right\rceil +1$, $\text{AW}(P_a+P_{n-a}) \leq \left\lceil\frac{n}{4}\right\rceil-1$. So we will assume Left will always move $\leftmove{P_{n+1}}{P_n}$ as this consistently yields the largest atomic weight. 

Let us now consider Right's first moves. We know Right is going to want to minimize the atomic weight, so he will move to $P_{n-3}+P_4$ rather than $P_{n-1}$ or $P_{n-2}$. He could also move $\rightmove{P_{n+1}}{P_b+P_{n+1-b}}$ for an integer $b$ such that $5\leq b\leq n-4$. Notice if $n~$mod$~4\neq 0$, then 
$$\left\lceil\frac{n}{4}\right\rceil=\left\lceil\frac{n+1}{4}\right\rceil\text{ and }  \left\lceil\frac{n-3}{4}\right\rceil=\left\lceil\frac{n+1}{4}\right\rceil-1.$$

Additionally, if $n~$mod$~4=0$, then 
$$\left\lceil\frac{n}{4}\right\rceil=\left\lceil\frac{n-3}{4}\right\rceil\text{ and }
\left\lceil\frac{n}{4}\right\rceil=\left\lceil\frac{n+1}{4}\right\rceil-1. $$

Then it follows that
\begin{align*}
\text{AW}(P_{n-3}+P_4)&=\text{AW}(P_{n-3}) \\
&=\left\lceil\frac{n-3}{4}\right\rceil-1\\
&=\left\lceil\frac{n+1}{4}\right\rceil-2.
\end{align*}
For his other first move,
\begin{align*}
\text{AW}(P_b+P_{n+1-b})&=\text{AW}(P_b)+\text{AW}(P_{n+1-b}) \\
&=\left\lceil\frac{b}{4}\right\rceil-1+\left\lceil\frac{n+1-b}{4}\right\rceil-1 \\
&\geq\left\lceil\frac{n+1}{4}\right\rceil-2.
\end{align*}
Similarly to the atomic weight of Left's move from $P_{n+1}$ to $P_a+P_{n-a}$, this atomic weight is bounded above by $\left\lceil\frac{n+1}{4}\right\rceil-1$. We will assume Right always moves $\rightmove{P_{n+1}}{P_{n-3}+P_4}$ as this consistently yields the lowest atomic weight.

By applying Theorem \ref{t:BG:AtomicCalc}, we see
\begin{align*}
\text{AW}(P_{n+1})&=\game{\text{AW}(P_n)-2}{\text{AW}(P_{n-3}+P_4)+2} \\
&=\game{\left\lceil\frac{n}{4}\right\rceil-3}{\left\lceil\frac{n+1}{4}\right\rceil}.
\end{align*}
This is clearly a number and the simplest number that can be assigned to this game position is $\left\lceil\frac{n}{4}\right\rceil-2$ which is an integer. Since the exception is invoked, we must compare $P_{n+1}$ to $\bigstar$. By Theorem \ref{t:MF:FarStarCompare}, we know $P_j>\bigstar$ for $j\geq 5$. So, AW$(P_{n+1})$ is the greatest integer, $y$, such that $y\lhd\left\lceil\frac{n+1}{4}\right\rceil$. Thus, AW$(P_{n+1})=\left\lceil\frac{n+1}{4}\right\rceil-1$.

Therefore, for an integer $n \geq 5$, the atomic weight of $P_n$ is $\left\lceil\frac{n}{4}\right\rceil-1$.

\end{proof}

We now have the atomic weight for any $P_n$, and this is the only information needed to solve the Mutual Failures Variant on paths.

\begin{theorem}\label{t:MF:Paths}
\doublespacing
For an integer $n\geq 9$, Left will win the path position $P_n$ in the Mutual Failures Variant regardless of which player moves first.
\end{theorem}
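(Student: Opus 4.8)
The plan is to deduce the theorem directly from the Two Ahead Rule (Theorem \ref{t:BG:TwoAhead}), which asserts that $\text{AW}(G)\geq 2$ forces $G>0$ and hence a Left win no matter who moves first. All of the substantive work has already been carried out in Theorems \ref{t:MF:FarStarCompare} and \ref{t:MF:AWofPn}, so the argument reduces to a single arithmetic check: that the atomic weight of $P_n$ reaches $2$ precisely at $n=9$.

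First I would invoke Theorem \ref{t:MF:AWofPn}, which gives $\text{AW}(P_n)=\ceil{\frac{n}{4}}-1$ for every integer $n\geq 5$. The goal is then to verify that $\ceil{\frac{n}{4}}-1\geq 2$ whenever $n\geq 9$. This inequality is equivalent to $\ceil{\frac{n}{4}}\geq 3$, which holds iff $\frac{n}{4}>2$, iff $n>8$; since $n$ is an integer, this is exactly the condition $n\geq 9$. (For the boundary value, $\ceil{\frac{8}{4}}=2$ while $\ceil{\frac{9}{4}}=3$, confirming the threshold.) Consequently $\text{AW}(P_n)\geq 2$ for all $n\geq 9$, and applying the first case of Theorem \ref{t:BG:TwoAhead} yields $P_n>0$, so Left wins $P_n$ regardless of which player moves first.

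There is no genuine obstacle remaining at this stage; the difficulty was entirely absorbed into establishing the closed form for the atomic weight, and in particular the far star comparison $P_n>\bigstar$ used to resolve the exception in the Atomic Weight Calculus. The only point worth flagging is why the theorem begins at $n=9$ rather than earlier: for $5\leq n\leq 8$ the formula gives $\text{AW}(P_n)=1$, so Theorem \ref{t:BG:TwoAhead} only yields $P_n\rhd 0$, i.e.\ a Left advantage that depends on moving first, not a guaranteed win from either side. Thus $n=9$ is the smallest value for which the ``two ahead'' hypothesis is met, and the statement is sharp in this sense.
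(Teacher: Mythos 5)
Your proposal is correct and follows exactly the same route as the paper: cite Theorem \ref{t:MF:AWofPn} for $\text{AW}(P_n)=\left\lceil\frac{n}{4}\right\rceil-1$, observe this is at least $2$ precisely when $n\geq 9$, and conclude via the Two Ahead Rule (Theorem \ref{t:BG:TwoAhead}). Your added remark on sharpness at the boundary $n=8$ versus $n=9$ is accurate but not part of the paper's argument.
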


\begin{proof}
Let $n\geq 9$ be a positive integer, and let $P_n$ be the path position on $n$ vertices in the Mutual Failures Variant. By Theorem \ref{t:BG:TwoAhead}, we know Left wins a game position, $G$, if AW$(G)\geq 2$, and by Theorem \ref{t:MF:AWofPn}, we know AW$(P_n)=\left\lceil\frac{n}{4}\right\rceil-1$. Thus, AW$(P_n)\geq 2$ when $n\geq 9$. Therefore, for $n\geq 9$, Left will win the path position $P_n$ regardless of which player moves first.

\end{proof}

As should be expected at this point, we will use the solution for paths to solve cycles.  Since any move on a cycle will produce a path, we see for an integer $n\geq 10$, Left will win the cycle position $C_n$ regardless of which player moves first.  Thus we have the following immediate result:  

\begin{theorem}\label{t:MF:Cycles}
For an integer $n\geq 10$, Left will win the cycle position $C_n$ in the Mutual Failures Variant regardless of which player moves first.
\end{theorem}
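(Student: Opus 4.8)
The plan is to reduce the claim directly to the path result of Theorem \ref{t:MF:Paths} by analyzing the two possible first moves on a cycle. The key observation is that every legal move on $C_n$ produces a path, so the entire outcome is governed by the already-solved path case. By the symmetry of the cycle, Right's only move up to isomorphism is to delete an edge; this leaves all $n$ vertices intact while turning the two endpoints of the deleted edge into leaves, so the result is the path position $P_n$. Likewise, Left's only move up to isomorphism is to delete a vertex together with its two incident edges, which leaves $P_{n-1}$. Both moves are legal for every $n\geq 3$ since neither creates an isolated vertex (in $P_n$ and $P_{n-1}$ the endpoints are leaves and all other vertices have degree $2$), and both players can always move, consistent with the Mutual Failures condition.

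First I would handle the case in which Right moves first. Since the cycle forces $\rightmove{C_n}{P_n}$, play continues from $P_n$ with Left to move. For $n\geq 10$ we certainly have $n\geq 9$, so Theorem \ref{t:MF:Paths} applies and Left wins $P_n$ regardless of which player moves first, in particular when it is Left's turn. Hence Left wins $C_n$ when Right moves first.

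Next I would handle the case in which Left moves first. Here the cycle forces $\leftmove{C_n}{P_{n-1}}$, and play continues from $P_{n-1}$ with Right to move. For $n\geq 10$ we have $n-1\geq 9$, so again Theorem \ref{t:MF:Paths} gives that Left wins $P_{n-1}$ regardless of which player moves first, in particular when it is Right's turn. Thus Left wins $C_n$ when Left moves first as well, and combining the two cases completes the argument.

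I expect no substantial obstacle here: once the forced reduction to paths is identified, the statement is an immediate corollary of Theorem \ref{t:MF:Paths}. The only point requiring genuine care is the index bookkeeping—the Left-first branch lands in $P_{n-1}$ and therefore needs $n-1\geq 9$, which is exactly why the hypothesis must be $n\geq 10$ rather than $n\geq 9$. One could alternatively compute AW$(C_n)$ directly via the Atomic Weight Calculus of Theorem \ref{t:BG:AtomicCalc}, feeding in the values AW$(P_n)$ and AW$(P_{n-1})$ supplied by Theorem \ref{t:MF:AWofPn} and comparing $C_n$ to $\bigstar$, but this added machinery is unnecessary since the outcome is already fully determined by the path theorem.
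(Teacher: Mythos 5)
Your proposal is correct and follows exactly the argument the paper uses (the paper states this result as an immediate consequence of Theorem \ref{t:MF:Paths}, noting that any move on a cycle produces a path): Right's move yields $P_n$ and Left's yields $P_{n-1}$, both Left wins by the path theorem once $n\geq 10$. Your index bookkeeping explaining why the bound is $n\geq 10$ rather than $n\geq 9$ is the same point implicit in the paper.
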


In wheel graphs, Left will want to delete the hub vertex, moving to $P_n$ or $C_n$, both of which are won by Left.  Therefore, for an integer $n\geq 10$, Left will win the wheel position $W_n$ regardless of which player moves first.  Thus we have the following immediate result:

\begin{theorem}\label{t:MF:Wheels}
For an integer $n\geq 10$, Left will win the wheel position $W_n$ in the Mutual Failures Variant regardless of which player moves first.
\end{theorem}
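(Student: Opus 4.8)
The plan is to exploit the fact that deleting the hub collapses a wheel to a cycle or a path, both of which have already been resolved in Left's favor. Throughout, write $W_n$ for the wheel on $n+1$ vertices, consisting of a rim cycle $v_1v_2\cdots v_nv_1$ together with a hub $h$ joined to every $v_i$. The central observation is that for $n\geq 10$ the hub has large degree and every rim vertex has degree at least $2$, so deleting $h$ never isolates a vertex and is therefore a non-losing Left move; moreover the wheel remains dense enough after any single edge deletion that Right still possesses a legal edge deletion, so $\GR\neq\emptyset$, and hence by the Mutual Failures condition ($\GR=\emptyset$ if and only if $\GL=\emptyset$) Left's hub deletion is never forbidden.

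First I would dispose of the case in which Left moves first. Left deletes the hub, effecting $\leftmove{W_n}{C_n}$: every rim vertex retains its two cyclic neighbours, so the resulting position is exactly the rim cycle $C_n$ with Right to move. By Theorem \ref{t:MF:Cycles}, Left wins $C_n$ for $n\geq 10$ regardless of who moves next, so Left wins $W_n$ moving first.

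Next I would treat the case in which Right moves first, splitting on the type of edge Right deletes. Since every edge of $W_n$ joins two vertices of degree at least $3$, no first edge deletion by Right isolates a vertex, so Right never loses immediately, and there are exactly two kinds of edges. If Right deletes a spoke $\{h,v_i\}$, the rim cycle is untouched, and after Left deletes the hub the position is again $C_n$, a Left win by Theorem \ref{t:MF:Cycles}. If instead Right deletes a rim edge $\{v_i,v_{i+1}\}$, then after Left deletes the hub the vertices $v_i$ and $v_{i+1}$ become the degree-one endpoints of a path, so the position is $P_n$, a Left win by Theorem \ref{t:MF:Paths} (applicable since $n\geq 10>9$). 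In each subcase the hub deletion is legal by the opening remark, and the reduced position is handed to Right, where the cited theorems guarantee a Left win regardless of who moves next.

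I expect the only genuine content beyond this bookkeeping to be the verification that deleting the hub is a legal move in the Mutual Failures Variant after an arbitrary first edge deletion by Right, namely that it neither isolates a rim vertex nor is blocked by the mutual-failure rule. This is routine for $n\geq 10$, but it is the one place where the variant's extra rule must actively be checked rather than ignored. Assembling the three cases then yields that Left wins $W_n$ for all $n\geq 10$ regardless of which player moves first.
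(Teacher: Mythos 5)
Your proposal is correct and follows essentially the same route as the paper, which likewise argues that Left's hub deletion reduces $W_n$ to $C_n$ (if Left moves first or Right deletes a spoke) or to $P_n$ (if Right first deletes a rim edge), then invokes Theorems \ref{t:MF:Cycles} and \ref{t:MF:Paths}. Your write-up is in fact more careful than the paper's, which states this reduction only as an informal remark; the explicit legality check of the hub deletion under the mutual-failure rule is a worthwhile addition.
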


Finally, complete graphs work exactly the same in the Mutual Failures Variant as they do in the Forbidden Leaf Variant. Any complete graph position on more than four vertices will reduce to the complete graph position on four vertices in which Left will win moving second. 

\begin{theorem}\label{t:FL:Completes}
For an integer $n\geq 5$, Left is guaranteed to win $K_n$ in the Mutual Failures Variant regardless of which player moves first.
\end{theorem}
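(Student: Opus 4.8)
The plan is to adapt the complete-graph argument used for the Forbidden Leaf Variant, since the strategy is identical and the only genuinely new task is to confirm that every move invoked remains valid under the Mutual Failures condition, i.e.\ that at each intermediate position both players still possess a base-legal move. The strategic heart is a single invariant maintained by Left: she always answers so as to leave Right on the move on a complete graph $\complete{m}$, shrinking $m$ by one each round, until Right is forced to move first on $\complete{4}$.

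First I would pin down the small positions. From the path analysis we already have $\path{2}=\path{3}=0$. Deleting any vertex of $\complete{3}$ yields $\path{2}$ and deleting any edge yields $\path{3}$, both base-legal and leaving a live position, so the Mutual Failures rule suppresses neither option and $\complete{3}=\game{0}{0}=\ast$. Next I would treat $\complete{4}$: Left's only symmetry-reduced move is $\leftmove{\complete{4}}{\complete{3}}=\ast$, after which Right wins moving first on $\ast$, so Left loses moving first; on the other hand every edge deletion by Right yields the same position $\complete{4}-e$ (two vertices of degree $2$, two of degree $3$) up to isomorphism, and Left answers $\leftmove{\complete{4}-e}{\path{3}}$ by deleting a degree-$3$ vertex, landing on $\path{3}=0$ with Right to move. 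Hence Left wins $\complete{4}$ moving second, and along the way I would check that $\complete{4}-e$ is live so that Left's response is a valid move.

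The core is then the induction claim: for every integer $m\geq 4$, Left wins $\complete{m}$ moving second, equivalently Right loses moving first on $\complete{m}$. The base case $m=4$ is above. For the step, suppose the claim for $\complete{m}$ and let Right move first on $\complete{m+1}$; by edge-transitivity he produces $\complete{m+1}-e$ for some edge $e=\{u,v\}$, and Left deletes $u$. Because the only missing edge at $u$ was $e$, deleting $u$ removes $u$ together with its remaining edges and leaves the complete graph $\complete{m}$ on the other $m$ vertices, now with Right to move; by the induction hypothesis Right loses. Since $m+1\geq 5$ throughout, every vertex or edge deletion here avoids creating an isolated vertex and leaves both players with moves, so all of these are valid Mutual Failures moves.

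Finally I would assemble the theorem for $n\geq 5$. If Left moves first she plays $\leftmove{\complete{n}}{\complete{n-1}}$, leaving Right to move on $\complete{n-1}$ with $n-1\geq 4$, so Right loses by the induction claim. If Right moves first, this is exactly ``Left wins $\complete{n}$ moving second,'' which is the induction claim at $m=n$. Either way Left wins. I expect the main obstacle to be bookkeeping rather than strategy: one must repeatedly confirm that the Mutual Failures condition never suppresses a move in Left's plan — that each $\complete{m}$, each $\complete{m}-e$, and the terminal $\path{3}$ is a position in which the relevant player still has a legal move — and one must handle $\complete{4}$ with care, since it is itself a second-player win and so Left can exploit it only when Right is compelled to move there first.
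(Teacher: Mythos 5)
Your proposal is correct and follows essentially the same strategy as the paper: Left mirrors Right's edge deletion by removing an incident vertex to restore a complete graph, forcing Right to move first on $\complete{4}$, where Left wins by answering $\rightmove{\complete{4}}{\complete{4}-e}$ with a deletion of a degree-$3$ vertex to reach $\path{3}=0$. Your added bookkeeping --- verifying that the Mutual Failures condition never suppresses a move along this line of play --- is exactly the detail the paper elides when it asserts the Forbidden Leaf argument carries over unchanged.
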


The results obtained in the Mutual Failures Variant show that Left is favored to win. We sought to remove the bias from the Mixed Deletion Game by forcing Left and Right to have identical sets of unplayable positions, but ultimately failed to do so. However, we can conclude that this bias does not result from one player being able to stockpile ``free moves.'' The only difference between players is what they are legally allowed to delete from a graph. This bias must then result from the differences between an edge and a vertex deletion.

\bibliographystyle{abbrv}

\bibliography{MyBibFile}

\end{document}